\theoremstyle{plain} 
\newtheorem{theorem}{\indent\sc Theorem}[section]
\newtheorem{lemma}[theorem]{\indent\sc Lemma}
\newtheorem{proposition}[theorem]{\indent\sc Proposition}
\theoremstyle{definition} 
\newtheorem{remark}[theorem]{\indent\sc Remark}
\newtheorem{example}[theorem]{\indent\sc Example}
\def\address#1#2{\begingroup
\noindent\parbox[t]{7.8cm}{%
\small{\scshape\ignorespaces#1}\par\vskip1ex
\noindent\small{\itshape E-mail address}%
\/: #2\par\vskip4ex}\hfill%
\endgroup}%
\title{On some theta constants and class fields} 
\author{
\textsc{Ja Kyung Koo and Dong Hwa Shin$^{*}$} 
}
\date{} 
\begin{document}

\maketitle

\footnote{ 
2010 \textit{Mathematics Subject Classification}. Primary 11F46,
Secondary 11G15, 14K25.}
\footnote{ 
\textit{Key words and phrases}. CM-fields, Shimura's reciprocity
law, theta functions. } \footnote{
\thanks{
The first named author was partially supported by the NRF of Korea
grant funded by MEST (2012-0000798). $^{*}$The corresponding author was
supported by Hankuk University of Foreign Studies Research Fund of
2012.} }

\begin{abstract}
We first find a sufficient condition for a product of
theta constants to be a Siegel modular function of a given even
level. And, when $K_{(2p)}$ denotes the ray class field of
$K=\mathbb{Q}(e^{2\pi i/5})$ modulo $2p$ for an odd prime $p$, we
describe a subfield of $K_{(2p)}$ generated by the special value of
certain theta constant by using Shimura's reciprocity law.
\end{abstract}

\section {Introduction}

Let $N$ ($\geq2$) be an integer and $\mathfrak{F}_N$ be the field of
meromorphic modular functions of level $N$ whose Fourier
coefficients lie in the $N$th cyclotomic field (\cite{Shi} or
\cite[$\S$6.3]{Lang}). For a vector
$\left[\begin{matrix}r\\s\end{matrix}\right]\in(1/N)\mathbb{Z}^2-\mathbb{Z}^2$
the \textit{Siegel function}
$g_{\left[\begin{smallmatrix}r\\s\end{smallmatrix}\right]}(\tau)$ is
defined on the upper half-plane by the following infinite product
\begin{equation*}
g_{\left[\begin{smallmatrix}r\\s\end{smallmatrix}\right]}(\tau)=-q^{(1/2)
(r^2-r+1/6)}e^{\pi is(r-1)}(1-q^re^{2\pi
is})\prod_{n=1}^{\infty}(1-q^{n+r}e^{2\pi is})(1-q^{n-r}e^{-2\pi
is}),
\end{equation*}
where $q=e^{2\pi i\tau}$ and $i=\sqrt{-1}$. Let
$\{m(r,s)\}_{\left[\begin{smallmatrix}r\\s\end{smallmatrix}\right]\in(1/N)\mathbb{Z}^2-\mathbb{Z}^2}$
be a family of integers such that $m(r,s)\neq 0$ only for finitely
many vectors $\left[\begin{matrix}r\\s\end{matrix}\right]$. Kubert and Lang \cite[Chapter 3, Theorem
5.3]{K-L} showed that if
$\{m(r,s)\}_{\left[\begin{smallmatrix}r\\s\end{smallmatrix}\right]}$
satisfies the quadratic relation modulo $N$, namely
\begin{eqnarray*}
&&\sum_{\left[\begin{smallmatrix}r\\s\end{smallmatrix}\right]} m(r,s)(Nr)^2\equiv\sum_{\left[\begin{smallmatrix}r\\s\end{smallmatrix}\right]}
m(r,s)(Ns)^2\equiv0\pmod{\gcd(2,N)\cdot N},\\
&&\sum_{\left[\begin{smallmatrix}r\\s\end{smallmatrix}\right]} m(r,s)(Nr)(Ns)\equiv0\pmod{N},
\end{eqnarray*}
and $12$ divides $\gcd(12,N)\cdot\sum_{\left[\begin{smallmatrix}r\\s\end{smallmatrix}\right]} m(r,s)$, then the
product $\prod_{\left[\begin{smallmatrix}r\\s\end{smallmatrix}\right]}
g_{\left[\begin{smallmatrix}r\\s\end{smallmatrix}\right]}(\tau)^{m(r,s)}$
belongs to $\mathfrak{F}_N$. In particular,
$g_{\left[\begin{smallmatrix}r\\s\end{smallmatrix}\right]}(\tau)^{12N}$
belongs to $\mathfrak{F}_N$ for any vector
$\left[\begin{matrix}r\\s\end{matrix}\right]\in(1/N)\mathbb{Z}^2-\mathbb{Z}^2$,
which depends only on
$\left[\begin{matrix}r\\s\end{matrix}\right]\pmod{\mathbb{Z}^2}$.
And, the group $\mathrm{GL}_2(\mathbb{Z}/N\mathbb{Z})$ acts on the
family
$\{g_{\left[\begin{smallmatrix}r\\s\end{smallmatrix}\right]}(\tau)^{12N}\}_
{\left[\begin{smallmatrix}r\\s\end{smallmatrix}\right]\in(1/N)\mathbb{Z}^2/\mathbb{Z}^2-
\{\left[\begin{smallmatrix}0\\0\end{smallmatrix}\right]\}}$ by the
rule
\begin{equation*}
(g_{\left[\begin{smallmatrix}r\\s\end{smallmatrix}\right]}(\tau)^{12N})^
{\alpha}=g_{{^t}\alpha\left[\begin{smallmatrix}r\\s\end{smallmatrix}\right]}(\tau)^{12N}
\quad(\alpha\in\mathrm{GL}_2(\mathbb{Z}/N\mathbb{Z})),
\end{equation*}
where ${^t}\alpha$ stands for the transpose of $\alpha$
\cite[Chapter 2, Proposition 1.3]{K-L}.
\par
Now, let $K$ be an imaginary quadratic field and 
$\mathcal{O}_K=\mathbb{Z}[\theta]$ be its ring of integers with
$\mathrm{Im}(\theta)>0$. For a positive integer $N$ we denote the
ray class field of $K$ modulo $N$ by $K_{(N)}$. The main theorem of
complex multiplication ensures that $K_{(N)}$ is generated by the
singular values $f(\theta)$ for all $f\in\mathfrak{F}_N$ which are
finite at $\theta$ \cite[Chapter 10, Corollary to Theorem 1]{Lang}.
And, by using Shimura's reciprocity law \cite[Theorem 6.31]{Shi}
Jung et al recently showed in \cite{J-K-S} that if
$K\neq\mathbb{Q}(\sqrt{-1}),\mathbb{Q}(\sqrt{-3})$ and $N\geq2$,
then $K_{(N)}$ is generated by the singular value
$g_{\left[\begin{smallmatrix}0\\1/N\end{smallmatrix}\right]}(\theta)^{12N}$
over $K$.
\par
In this paper we shall attempt to find higher dimensional analogues
of these results. Siegel modular functions of level $N$ ($\geq1$)
defined on the Siegel upper half-space $\mathfrak{H}_g$ ($g\geq2$)
are certain multi-variable functions which generalize meromorphic
modular functions of one variable ($\S$\ref{Smf}). As in the case of
modular functions, the action of the general symplectic group
$\mathrm{GSp}_{2g}(\mathbb{Z}/N\mathbb{Z})$ on the Siegel modular
functions of level $N$ was investigated by Shimura (Proposition
\ref{Groupaction}). If $\mathbf{r},\mathbf{s}\in(1/N)\mathbb{Z}^g$,
then the theta constant
\begin{equation*}
\Phi_{\left[\begin{smallmatrix}\mathbf{r}\\\mathbf{s}\end{smallmatrix}\right]}(Z)=
\frac{\sum_{\mathbf{x}\in\mathbb{Z}^g}
e({^t}(\mathbf{x}+\mathbf{r})Z(\mathbf{x}+\mathbf{r})/2
+{^t}(\mathbf{x}+\mathbf{r})\mathbf{s})}{
\sum_{\mathbf{x}\in\mathbb{Z}^g} e({^t}\mathbf{x}Z\mathbf{x}/2)}
\end{equation*}
is a typical example of Siegel modular functions (of level $2N^2$)
($\S$\ref{quotient}). We shall first give a sufficient
condition for a product of theta constants to be a Siegel modular
function of level $N$ when $N$ is even (Theorem \ref{modularity}).
And, we shall further show that certain subgroup of
$\mathrm{GSp}_{2g}(\mathbb{Z}/N\mathbb{Z})$ acts on the family
$\{\Phi_{\left[\begin{smallmatrix}\mathbf{r}\\\mathbf{s}\end{smallmatrix}\right]}(Z)^{2N^2}\}
_{\mathbf{r},\mathbf{s}\in(1/N)\mathbb{Z}^g/\mathbb{Z}^g}$ in a natural way, namely
\begin{equation*}
(\Phi_{\left[\begin{smallmatrix}\mathbf{r}\\\mathbf{s}\end{smallmatrix}\right]}(Z)^{2N^2})
^\alpha=\Phi_{{^t}\alpha\left[\begin{smallmatrix}\mathbf{r}\\\mathbf{s}\end{smallmatrix}\right]}(Z)^{2N^2}
\end{equation*}
(Theorem \ref{family}).
\par
On the other hand, let $K$ be a CM-field, $K^*$ be its reflex field
and $Z_0$ be the associated CM-point ($\S$\ref{Srl}). The theory of
complex multiplication for polarized abelian varieties of higher
dimension developed by Shimura claims that if $f$ is a Siegel
modular function that is finite at $Z_0$, then the special value
$f(Z_0)$ lies in some abelian extension of $K^*$. Furthermore,
Shimura's reciprocity law describes Galois actions on $f(Z_0)$ in
terms of actions of general symplectic groups on $f$ (Proposition
\ref{reciprocity}). Here, we focus on the case where
$K=\mathbb{Q}(e^{2\pi i/5})$. For an odd prime $p$ let $K_{(2p)}$
and $K_{(2p^2)}$ be the ray class fields of $K$ modulo $2p$ and
$2p^2$, respectively. Komatsu
 considered in \cite{Komatsu} certain intermediate field $L$ of
$K_{(2p^2)}/K_{(2p)}$ with $[L:K_{(2p)}]=p^3$ and provided a normal
basis of $L$ over $K_{(2p)}$. Unlike Komatsu's work, however, we
shall examine the field
$K(\Phi_{\left[\begin{smallmatrix}1/p\\0\\0\\0\end{smallmatrix}\right]}(Z_0)^{2p^2})$
as a subfield of $K_{(2p)}$ (Theorems \ref{howlarge} and
\ref{belong}). To this end we shall utilize transformation formulas
of theta functions (Propositions \ref{transl} and \ref{transf})
together with Shimura's reciprocity law.
\par
And, we shall also present some ideas of combining two generators of
an abelian extension to get a primitive generator (Theorems
\ref{primitive1} and \ref{primitive2}).

\section {Siegel modular forms}\label{Smf}

We shall introduce necessary facts about Siegel modular forms, and
explain actions of general symplectic groups on the Siegel modular
functions whose Fourier coefficients lie in some cyclotomic fields.
\par
Let $g$ ($\geq2$) be a positive integer and
\begin{equation*}
J=\left[\begin{matrix}0 & -I_g\\I_g & 0
\end{matrix}\right].
\end{equation*}
Given a commutative ring $R$ with unity we let
\begin{equation*}
\mathrm{GSp}_{2g}(R)=\{\alpha\in\mathrm{Mat}_{2g}(R)~|~ {^t}\alpha
J\alpha =\nu J ~\textrm{for some}~\nu\in R^\times\}.
\end{equation*}
Considering $\nu$ as a homomorphism $\mathrm{GSp}_{2g}(R)\rightarrow
R^\times$ we denote its kernel by $\mathrm{Sp}_{2g}(R)$, namely
\begin{equation*}
\mathrm{Sp}_{2g}(R)=\{\alpha\in\mathrm{Mat}_{2g}(R)~|~ {^t}\alpha
J\alpha=J \}.
\end{equation*}
We further define a homomorphism
$\iota:R^\times\rightarrow\mathrm{GSp}_{2g}(R)$ by
\begin{equation*}
\iota(a)=\left[\begin{matrix} I_g & 0 \\
0 & a^{-1}I_g
\end{matrix}\right].
\end{equation*}
One can then readily show that $\nu(\iota(a))=a^{-1}$.
\par
The \textit{Siegel upper half-space} $\mathfrak{H}_g$ is defined by
\begin{equation*}
\mathfrak{H}_g =\{Z\in
\mathrm{Mat}_g(\mathbb{C})~|~{^t}Z=Z,~\mathrm{Im}(Z)~\textrm{is
positive definite}\}.
\end{equation*}
Then, it is well-known that $\mathrm{Sp}_{2g}(\mathbb{Z})$ acts on
$\mathfrak{H}_g$ by
\begin{equation*}
\left[\begin{matrix}A & B\\C&D\end{matrix}\right](Z)
=(AZ+B)(CZ+D)^{-1},
\end{equation*}
where $A,B,C,D$ are $g\times g$ block matrices \cite[$\S$1,
Proposition 1]{Klingen}. Let $N$ ($\geq1$) and $k$ be integers, and
define the group
\begin{equation*}
\Gamma(N)=\{\gamma\in\mathrm{Sp}_{2g}(\mathbb{Z})~|~ \gamma\equiv
I_{2g}\pmod{N}\}.
\end{equation*}
A holomorphic function $f:\mathfrak{H}_g\rightarrow\mathbb{C}$ is
called a \textit{Siegel modular form} of weight $k$ and level $N$,
if
\begin{equation*}
f(\gamma(Z))=\det(CZ+D)^k f(Z)\quad\textrm{for every}~
\gamma=\left[\begin{matrix} A&B\\C&D
\end{matrix}\right]\in\Gamma(N).
\end{equation*}
For $z\in\mathbb{C}$, we set
\begin{equation*}
e(z)=e^{2\pi iz}.
\end{equation*}
As a consequence of K\"{o}echer's principle, a Siegel modular form
$f$ can be written as
\begin{equation*}
f(Z)=\sum_{\xi}c(\xi)e(\mathrm{tr}(\xi Z)/N)\quad(c(\xi)\in
\mathbb{C}),
\end{equation*}
where $\xi$ runs over all $g\times g$ positive semi-definite
symmetric matrices over half integers with integral diagonal entries
\cite[$\S$4, Theorem 1]{Klingen}. This expansion is called the
\textit{Fourier expansion} of $f$ with \textit{Fourier coefficients}
$c(\xi)$. Note that if
$\xi=\left[\begin{matrix}\xi_{jk}\end{matrix}\right]_{1\leq j,k\leq
g}$ and $Z=\left[\begin{matrix}Z_{jk}\end{matrix}\right]_{1\leq
j,k\leq g}$, then
\begin{equation*}
\mathrm{tr}(\xi Z)=\sum_{j=1}^g\xi_{jj}Z_{jj} +2\sum_{1\leq j<k\leq
g}\xi_{jk}Z_{jk},
\end{equation*}
from which it follows that
\begin{equation*}
f(Z)=\sum_{\xi}c(\xi) \bigg(\prod_{j=1}^g
e(\xi_{jj}Z_{jj}/N)\prod_{1\leq j<k\leq g} e(2\xi_{jk}Z_{jk}/N)
\bigg).
\end{equation*}
Letting $\zeta_N=e(1/N)$ we consider the field
\begin{equation*}
\mathcal{F}_N=\left\{
\begin{array}{lll}g_1/g_2 &\bigg|&
\begin{array}{l}g_1~\textrm{and}~g_2~(\neq0)~\textrm{are Siegel
modular forms of the same weight such that }\\
\textrm{$g_1/g_2$ is invariant under $\Gamma(N)$ and its Fourier
coefficients lie in $\mathbb{Q}(\zeta_N)$}\end{array}
\end{array}\right\}.
\end{equation*}

\begin{proposition}\label{Groupaction}
\begin{itemize}
\item[\textup{(i)}] $\iota((\mathbb{Z}/N\mathbb{Z})^\times)$ acts on
$\mathcal{F}_N$ as follows: If $a\in
(\mathbb{Z}/N\mathbb{Z})^\times$ and $f=\sum_\xi
c(\xi)e(\mathrm{tr}(\xi Z)/N)\in\mathcal{F}_N$, then
\begin{equation*}
f^{\iota(a)}=\sum_\xi c(\xi)^{\rho(a)^{-1}}e(\mathrm{tr}(\xi Z)/N),
\end{equation*}
where $\rho(a)$ is an endomorphism of $\mathbb{Q}(\zeta_N)$ induced
from the map $\zeta_N\mapsto\zeta_N^a$.
\item[\textup{(ii)}] $\mathrm{Sp}_{2g}(\mathbb{Z})$ acts on
$\mathcal{F}_N$ by compositions, that is, if
$\gamma\in\mathrm{Sp}_{2g}(\mathbb{Z})$ and $f\in \mathcal{F}_N$,
then
\begin{equation*}
f^\gamma=f\circ\gamma.
\end{equation*}
\item[\textup{(iii)}] $\mathrm{GSp}_{2g}(\mathbb{Z}/N\mathbb{Z})$ acts on $\mathcal{F}_N$ as follows: Let
$\alpha\in\mathrm{GSp}_{2g}(\mathbb{Z}/N\mathbb{Z})$ and
$f\in\mathcal{F}_N$. Set
$a=\nu(\alpha)\in(\mathbb{Z}/N\mathbb{Z})^\times$ and
$\gamma=\iota(a)\alpha\in\mathrm{Sp}_{2g}(\mathbb{Z}/N\mathbb{Z})$.
Lift $\gamma$ to $\gamma_0\in\mathrm{Sp}_{2g}(\mathbb{Z})$. Then,
\begin{equation*}
f^\alpha=(f^{\iota(a)^{-1}})^{\gamma_0}.
\end{equation*}
\end{itemize}
\end{proposition}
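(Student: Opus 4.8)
The plan is to treat (i) and (ii) as descriptions of the restriction of one and the same $\mathrm{GSp}_{2g}(\mathbb{Z}/N\mathbb{Z})$-action to the two subgroups $\iota((\mathbb{Z}/N\mathbb{Z})^\times)$ and the image of $\mathrm{Sp}_{2g}(\mathbb{Z})$, and then to deduce (iii) by combining them along the decomposition $\alpha=\iota(\nu(\alpha))^{-1}\gamma$ with $\gamma\in\mathrm{Sp}$. The two arithmetic facts I would invoke (rather than reprove) are Shimura's rationality theorems: first, that the $\mathbb{Q}(\zeta_N)$-space of Siegel modular forms of weight $k$ and level $N$ is stable under $\mathrm{Gal}(\mathbb{Q}(\zeta_N)/\mathbb{Q})$ acting coefficientwise on Fourier expansions; and second, that for $\gamma\in\mathrm{Sp}_{2g}(\mathbb{Z})$ the composite $g\circ\gamma$ of such a form again has Fourier coefficients in $\mathbb{Q}(\zeta_N)$.

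For (i), I would first check that $f\mapsto f^{\iota(a)}$ is well defined: writing $f=g_1/g_2$ with $g_1,g_2$ of equal weight and level, coefficientwise application of $\rho(a)^{-1}$ to each $g_j$ yields, by the first fact, modular forms of the same weight and level with $\mathbb{Q}(\zeta_N)$-coefficients, so their quotient lies in $\mathcal{F}_N$ and agrees with the stated series. The group-action axioms are then formal: since $\iota(a)\iota(b)=\iota(ab)$ and $\rho$ is a homomorphism into the abelian group $\mathrm{Gal}(\mathbb{Q}(\zeta_N)/\mathbb{Q})$, conjugating by $\rho(a)^{-1}$ then $\rho(b)^{-1}$ amounts to conjugating by $\rho(ab)^{-1}$, whence $(f^{\iota(a)})^{\iota(b)}=f^{\iota(ab)}$. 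For (ii), the cancellation of automorphy factors shows $f\circ\gamma=(g_1\circ\gamma)/(g_2\circ\gamma)$ is again a ratio of equal-weight level-$N$ forms, and the second fact places its coefficients in $\mathbb{Q}(\zeta_N)$, so $f^\gamma\in\mathcal{F}_N$; that the assignment is a right action follows from $(\gamma_1\gamma_2)(Z)=\gamma_1(\gamma_2(Z))$ on $\mathfrak{H}_g$, and that it factors through reduction mod $N$ follows because for $\gamma\in\Gamma(N)$ the automorphy factors of $g_1$ and $g_2$ coincide, forcing $f\circ\gamma=f$.

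Finally for (iii), since $\nu(\iota(a))=a^{-1}$ and $\nu(\alpha)=a$ the homomorphism $\nu$ gives $\nu(\iota(a)\alpha)=1$, so $\gamma=\iota(a)\alpha$ lands in $\mathrm{Sp}_{2g}(\mathbb{Z}/N\mathbb{Z})$ and lifts to some $\gamma_0\in\mathrm{Sp}_{2g}(\mathbb{Z})$ by surjectivity of reduction; the formula $f^\alpha=(f^{\iota(a)^{-1}})^{\gamma_0}$ simply implements $\alpha=\iota(a)^{-1}\gamma$. Independence of the lift is immediate from (ii), which factors through reduction mod $N$.

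The genuinely substantive point, and the one I expect to be the main obstacle, is that this prescription is a homomorphism, i.e. $(f^\alpha)^\beta=f^{\alpha\beta}$. Because $\iota((\mathbb{Z}/N\mathbb{Z})^\times)$ and $\mathrm{Sp}_{2g}(\mathbb{Z}/N\mathbb{Z})$ do not commute inside $\mathrm{GSp}_{2g}(\mathbb{Z}/N\mathbb{Z})$, this reduces, after using (i) to compose the Galois parts and (ii) to compose the geometric parts, to a single commutation relation of the form $(f^{\gamma_0})^{\iota(b)^{-1}}=(f^{\iota(b)^{-1}})^{\gamma_0'}$, where $\gamma_0'$ lifts the conjugate $\iota(b)\gamma\iota(b)^{-1}$; this expresses how Galois conjugation of Fourier coefficients interacts with geometric composition and is exactly the $q$-expansion shadow of Shimura's reciprocity law. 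I would establish it either by appealing directly to Shimura's adelic action of $\mathrm{GSp}_{2g}$ on the full field of Siegel modular functions, in which case the three displayed formulas are merely the restrictions of one action to the relevant subgroups and the homomorphism property is automatic, or, if a self-contained argument is preferred, by verifying the commutation on a generating set of $\mathrm{Sp}_{2g}(\mathbb{Z}/N\mathbb{Z})$ and tracking the explicit effect of those generators on the $q$-expansion.
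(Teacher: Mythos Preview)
Your proposal is correct in outline, but you should know that the paper does not actually prove this proposition: it simply cites Shimura (\cite[\S1]{Shimura2}) and moves on. So there is no ``paper's own proof'' to compare against beyond that reference. Your sketch is therefore considerably more detailed than what the paper supplies, and in particular your identification of the commutation relation $(f^{\gamma_0})^{\iota(b)^{-1}}=(f^{\iota(b)^{-1}})^{\gamma_0'}$ as the substantive point is exactly right---this is precisely the content that Shimura establishes in the cited work via the adelic action, and your first option (appealing directly to Shimura's construction) is effectively what the paper does. Your alternative route of checking the commutation on generators and tracking $q$-expansions would also work but is not pursued in the paper.
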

\begin{proof}
See \cite[$\S$1]{Shimura2}.
\end{proof}

\section{Theta functions}

We shall briefly review fundamental transformation formulas of theta
functions.
\par
Let $g$ ($\geq2$) be an integer, $\mathbf{u}\in\mathbb{C}^g$,
$Z\in\mathfrak{H}_g$ and $\mathbf{r},\mathbf{s}\in\mathbb{R}^g$. We
define a (classical) \textit{theta function} by
\begin{equation}\label{theta}
\Theta(\mathbf{u},Z;\mathbf{r},\mathbf{s})=
\sum_{\mathbf{x}\in\mathbb{Z}^g}
e({^t}(\mathbf{x}+\mathbf{r})Z(\mathbf{x}+\mathbf{r})/2
+{^t}(\mathbf{x}+\mathbf{r})(\mathbf{u}+\mathbf{s})),
\end{equation}
which is a holomorphic function on $Z$. Since $\mathbf{x}$ can be
replaced by $-\mathbf{x}$ in the above summation, we get the
relation
\begin{equation}\label{sign}
\Theta(-\mathbf{u},Z;-\mathbf{r},-\mathbf{s})
=\Theta(\mathbf{u},Z;\mathbf{r},\mathbf{s}).
\end{equation}

\begin{proposition}\label{transl}
If $\mathbf{a},\mathbf{b}\in\mathbb{Z}^g$, then we have the
translation formula
\begin{equation*}
\Theta(\mathbf{u},Z;\mathbf{r}+\mathbf{a},\mathbf{s}+\mathbf{b})=
e({^t}\mathbf{r}\mathbf{b})\Theta(\mathbf{u},Z;\mathbf{r},\mathbf{s}).
\end{equation*}
\end{proposition}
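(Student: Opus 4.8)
The plan is to substitute the shifted characteristics $\mathbf{r}+\mathbf{a}$ and $\mathbf{s}+\mathbf{b}$ directly into the defining series \eqref{theta} and then reindex the lattice sum so as to absorb the integral shift $\mathbf{a}$ completely, leaving only a global phase factor coming from $\mathbf{b}$. Since $Z\in\mathfrak{H}_g$ has positive definite imaginary part, the series converges absolutely, so the reindexing and factoring of terms carried out below are legitimate. Concretely, I would first write out
\[
\Theta(\mathbf{u},Z;\mathbf{r}+\mathbf{a},\mathbf{s}+\mathbf{b})=\sum_{\mathbf{x}\in\mathbb{Z}^g} e({^t}(\mathbf{x}+\mathbf{r}+\mathbf{a})Z(\mathbf{x}+\mathbf{r}+\mathbf{a})/2+{^t}(\mathbf{x}+\mathbf{r}+\mathbf{a})(\mathbf{u}+\mathbf{s}+\mathbf{b}))
\]
and set $\mathbf{y}=\mathbf{x}+\mathbf{a}$. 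Because $\mathbf{a}\in\mathbb{Z}^g$, the map $\mathbf{x}\mapsto\mathbf{y}$ is a bijection of $\mathbb{Z}^g$, so the summation range is unchanged while every occurrence of $\mathbf{x}+\mathbf{r}+\mathbf{a}$ becomes $\mathbf{y}+\mathbf{r}$. This is the step that makes the dependence on $\mathbf{a}$ disappear entirely.

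The remaining task is to extract the contribution of $\mathbf{b}$. After reindexing, the linear part of the exponent reads ${^t}(\mathbf{y}+\mathbf{r})(\mathbf{u}+\mathbf{s})+{^t}(\mathbf{y}+\mathbf{r})\mathbf{b}$, and I would split the last term as ${^t}(\mathbf{y}+\mathbf{r})\mathbf{b}={^t}\mathbf{y}\mathbf{b}+{^t}\mathbf{r}\mathbf{b}$. Since $\mathbf{y},\mathbf{b}\in\mathbb{Z}^g$, the quantity ${^t}\mathbf{y}\mathbf{b}$ is an integer, whence $e({^t}\mathbf{y}\mathbf{b})=1$; thus the factor $e({^t}(\mathbf{y}+\mathbf{r})\mathbf{b})$ collapses to the constant $e({^t}\mathbf{r}\mathbf{b})$, independent of the summation variable. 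Note that the quadratic term ${^t}(\mathbf{y}+\mathbf{r})Z(\mathbf{y}+\mathbf{r})/2$ is untouched by the $\mathbf{b}$-shift, so no cross terms intervene.

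Pulling this constant out of the sum leaves exactly
\[
\sum_{\mathbf{y}\in\mathbb{Z}^g} e({^t}(\mathbf{y}+\mathbf{r})Z(\mathbf{y}+\mathbf{r})/2+{^t}(\mathbf{y}+\mathbf{r})(\mathbf{u}+\mathbf{s}))=\Theta(\mathbf{u},Z;\mathbf{r},\mathbf{s}),
\]
which yields the asserted identity. There is no genuine obstacle here; the only point deserving care is that the two shifts play asymmetric roles. The shift $\mathbf{a}$ is absorbed harmlessly by the translation invariance of the lattice $\mathbb{Z}^g$ and leaves no trace, whereas $\mathbf{b}$ survives as the value of the character $\mathbf{y}\mapsto e({^t}\mathbf{y}\mathbf{b})$ on $\mathbb{Z}^g$ (which is trivial) together with the surviving piece $e({^t}\mathbf{r}\mathbf{b})$ on the non-integral part $\mathbf{r}$. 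Keeping these two mechanisms cleanly separated is what produces the clean factor $e({^t}\mathbf{r}\mathbf{b})$.
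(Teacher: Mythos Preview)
Your argument is correct: the substitution $\mathbf{y}=\mathbf{x}+\mathbf{a}$ absorbs the $\mathbf{a}$-shift by lattice translation, and the split ${^t}(\mathbf{y}+\mathbf{r})\mathbf{b}={^t}\mathbf{y}\mathbf{b}+{^t}\mathbf{r}\mathbf{b}$ together with $e({^t}\mathbf{y}\mathbf{b})=1$ isolates the claimed phase. The paper does not actually prove this proposition but simply cites \cite[p.~676~(13)]{Shimura3}; your direct verification from the defining series is the standard argument and is more self-contained than the paper's treatment.
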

\begin{proof}
See \cite[p.676 (13)]{Shimura3}.
\end{proof}

For a square matrix $\alpha$ we denote by $\{\alpha\}$ the column
vector whose components are the diagonal elements of $\alpha$.

\begin{proposition}\label{transf}
Let $\gamma=\left[\begin{matrix}A&B\\C&D
\end{matrix}\right]\in\mathrm{Sp}_{2g}(\mathbb{Z})$
such that $\{{^t}AC\},\{{^t}BD\}\in2\mathbb{Z}^g$. We get the
transformation formula
\begin{eqnarray*}
&&\Theta({^t}(CZ+D)^{-1}\mathbf{u},\gamma(Z);\mathbf{r},\mathbf{s})\\
&=& \lambda_\gamma
e(({^t}\mathbf{r}\mathbf{s}-{^t}\mathbf{r}'\mathbf{s}')/2)
\det(CZ+D)^{1/2}e(({^t}\mathbf{u}(CZ+D)^{-1}C\mathbf{u})/2)\Theta(\mathbf{u},Z;\mathbf{r}',\mathbf{s}'),
\end{eqnarray*}
where $\lambda_\gamma$ is a constant of absolute value $1$ depending
on $\gamma$ and the choice of the branch of $\det(CZ+D)^{1/2}$, and
$\left[\begin{matrix}
\mathbf{r}'\\
\mathbf{s}'
\end{matrix}\right]
={^t}\gamma \left[\begin{matrix}
\mathbf{r}\\
\mathbf{s}
\end{matrix}\right]$.
\end{proposition}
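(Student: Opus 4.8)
The plan is to reduce the formula to generators of $\mathrm{Sp}_{2g}(\mathbb{Z})$, to verify it on each generator, and then to propagate it to arbitrary $\gamma$ by composing the individual formulas. It is a classical fact that $\mathrm{Sp}_{2g}(\mathbb{Z})$ is generated by the matrices
$$t_B=\left[\begin{matrix} I_g & B\\ 0 & I_g\end{matrix}\right]\ (B={}^tB\in\mathrm{Mat}_g(\mathbb{Z})),\quad m_U=\left[\begin{matrix} {}^tU^{-1} & 0\\ 0 & U\end{matrix}\right]\ (U\in\mathrm{GL}_g(\mathbb{Z})),\quad J.$$
I would first establish, for an \emph{arbitrary} $\gamma\in\mathrm{Sp}_{2g}(\mathbb{Z})$, the transformation formula in the stated shape but with the output characteristic being ${}^t\gamma\left[\begin{smallmatrix}\mathbf{r}\\\mathbf{s}\end{smallmatrix}\right]$ augmented by the half-integer vector $\tfrac12\left[\begin{smallmatrix}\{{}^tAC\}\\\{{}^tBD\}\end{smallmatrix}\right]$, and only afterwards impose the hypothesis to remove this correction.

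For the generators $t_B$ and $m_U$ the verification is a direct manipulation of the defining series (\ref{theta}). For $t_B$ one has $\gamma(Z)=Z+B$ and $CZ+D=I_g$, so both $\det(CZ+D)^{1/2}$ and the Gaussian factor $e(({}^t\mathbf{u}(CZ+D)^{-1}C\mathbf{u})/2)$ are trivial; expanding $\tfrac12{}^t(\mathbf{x}+\mathbf{r})B(\mathbf{x}+\mathbf{r})$, the cross term ${}^t\mathbf{x}B\mathbf{r}$ and the constant $\tfrac12{}^t\mathbf{r}B\mathbf{r}$ reproduce the honest characteristic change $\mathbf{s}\mapsto\mathbf{s}+B\mathbf{r}$, while the diagonal part contributes $e(\tfrac12{}^t\{B\}\mathbf{x})$ modulo an integer (using $x_j^2\equiv x_j\bmod 2$), which is exactly the shift of $\mathbf{s}$ by $\tfrac12\{{}^tBD\}=\tfrac12\{B\}$. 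For $m_U$ one has $\gamma(Z)={}^tU^{-1}ZU^{-1}$, $CZ+D=U$ and ${}^t(CZ+D)^{-1}\mathbf{u}={}^tU^{-1}\mathbf{u}$; the substitution $\mathbf{x}\mapsto U\mathbf{x}$, a bijection of $\mathbb{Z}^g$, turns the series into the one with characteristic $(U^{-1}\mathbf{r},{}^tU\mathbf{s})={}^tm_U\left[\begin{smallmatrix}\mathbf{r}\\\mathbf{s}\end{smallmatrix}\right]$, and $\det U=\pm1$ is absorbed into $\lambda_\gamma$.

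The only genuinely analytic step is the generator $J$, where $\gamma(Z)=-Z^{-1}$, $CZ+D=Z$ and ${}^t(CZ+D)^{-1}\mathbf{u}=Z^{-1}\mathbf{u}$. Here the formula is the classical theta inversion obtained by Poisson summation: the Fourier transform of the Gaussian $\mathbf{y}\mapsto e(-\tfrac12{}^t\mathbf{y}Z^{-1}\mathbf{y})$ is proportional to $\det(Z/i)^{1/2}e(\tfrac12{}^t\mathbf{w}Z\mathbf{w})$, and summing over the dual lattice converts $\Theta(Z^{-1}\mathbf{u},-Z^{-1};\mathbf{r},\mathbf{s})$ into $\Theta(\mathbf{u},Z;\mathbf{s},-\mathbf{r})$ times precisely $\det(Z)^{1/2}e(({}^t\mathbf{u}Z^{-1}\mathbf{u})/2)$; the branch ambiguity of $\det(Z/i)^{1/2}$ together with the interchange of $\mathbf{r},\mathbf{s}$ accounts for the unit constant $\lambda_J$ and for the phase $e(({}^t\mathbf{r}\mathbf{s}-{}^t\mathbf{r}'\mathbf{s}')/2)$, which here equals $e({}^t\mathbf{r}\mathbf{s})$. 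To pass to a general $\gamma=\gamma_1\gamma_2$ I would apply the $\gamma_2$-formula and then the $\gamma_1$-formula, using $\gamma(Z)=\gamma_1(\gamma_2(Z))$, the cocycle relation $\det(C_\gamma Z+D_\gamma)=\pm\det(C_{\gamma_1}\gamma_2(Z)+D_{\gamma_1})\det(C_{\gamma_2}Z+D_{\gamma_2})$ for the automorphy factor, and the identity ${}^t\gamma={}^t\gamma_2\,{}^t\gamma_1$ for the characteristics; the product of two unit constants with the branch discrepancy is again a unit constant.

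Finally, under the hypothesis $\{{}^tAC\},\{{}^tBD\}\in2\mathbb{Z}^g$ the corrections $\tfrac12\{{}^tAC\},\tfrac12\{{}^tBD\}$ become integral, so Proposition \ref{transl} identifies the corrected theta with $\Theta(\mathbf{u},Z;\mathbf{r}',\mathbf{s}')$ up to a root-of-unity phase, which is absorbed into $\lambda_\gamma$; this yields the clean statement. I expect the main obstacle to be neither the generators $t_B,m_U$ nor the Poisson step in isolation, but the bookkeeping of the branch of $\det(CZ+D)^{1/2}$ and of the unit constants through the composition, together with the verification that the characteristic corrections are exactly $\tfrac12\{{}^tAC\}$ and $\tfrac12\{{}^tBD\}$ so that the hypothesis does its job. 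Since only $|\lambda_\gamma|=1$ is asserted, one need not evaluate the multiplier (an eighth root of unity times a Gauss-sum factor) explicitly, which removes the most delicate part of the classical computation.
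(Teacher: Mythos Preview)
Your outline is correct and follows the classical route: reduce to the generators $t_B$, $m_U$, $J$ of $\mathrm{Sp}_{2g}(\mathbb{Z})$, handle $t_B$ and $m_U$ by direct series manipulation, handle $J$ by Poisson summation, and then compose, absorbing all branch and phase ambiguities into the unit-modulus constant $\lambda_\gamma$. The observation that one should first prove the general formula with the half-integer shifts $\tfrac12\{{}^tAC\}$, $\tfrac12\{{}^tBD\}$ in the characteristic and only then invoke the hypothesis (via Proposition~\ref{transl}) to remove them is exactly right, and your assessment of where the real bookkeeping lies is accurate.

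The paper, however, does not prove this proposition at all: its ``proof'' is simply the citation \cite[Proposition~1.3]{Shimura3}. So there is nothing to compare at the level of argument; the paper treats the transformation formula as a known input from Shimura's 1976 Duke paper, whereas you have sketched a self-contained derivation. What your approach buys is independence from the reference and a clear view of why the hypothesis on $\{{}^tAC\}$ and $\{{}^tBD\}$ is exactly what is needed; what the paper's approach buys is brevity, since the result is indeed standard and the focus of the paper lies elsewhere.
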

\begin{proof}
See \cite[Proposition 1.3]{Shimura3}.
\end{proof}

And, let
\begin{equation*}
\Sigma_-=\bigg\{\left[\begin{matrix}\mathbf{r}\\\mathbf{s}\end{matrix}\right]\in\mathbb{Q}^{2g}~|~\mathbf{r},\mathbf{s}\in
(1/2)\mathbb{Z}^g~\textrm{and}~e(2{^t}\mathbf{r}\mathbf{s})=-1\}.
\end{equation*}

\begin{proposition}\label{Igusazero}
Let $\mathbf{r},\mathbf{s}\in\mathbb{Q}^g$. Then,
$\Theta(\mathbf{0},Z;\mathbf{r},\mathbf{s})$ represents the zero
function on $Z$ if and only if
$\left[\begin{matrix}\mathbf{r}\\\mathbf{s}\end{matrix}\right]\in\Sigma_-$.
\end{proposition}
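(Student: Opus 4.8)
The plan is to prove the two implications separately, disposing of the ``if'' direction by the formal identities already available and reserving the real work for the ``only if'' direction, which I will argue in contrapositive form.

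For the ``if'' direction, suppose $\left[\begin{smallmatrix}\mathbf{r}\\\mathbf{s}\end{smallmatrix}\right]\in\Sigma_-$, so that $\mathbf{r},\mathbf{s}\in(1/2)\mathbb{Z}^g$ and $e(2\,{}^t\mathbf{r}\mathbf{s})=-1$. Since $2\mathbf{r},2\mathbf{s}\in\mathbb{Z}^g$, I would combine the symmetry relation \eqref{sign} at $\mathbf{u}=\mathbf{0}$ with the translation formula of Proposition \ref{transl} (applied with $\mathbf{a}=-2\mathbf{r}$, $\mathbf{b}=-2\mathbf{s}$) to get
\begin{equation*}
\Theta(\mathbf{0},Z;\mathbf{r},\mathbf{s})
=\Theta(\mathbf{0},Z;-\mathbf{r},-\mathbf{s})
=e(-2\,{}^t\mathbf{r}\mathbf{s})\,\Theta(\mathbf{0},Z;\mathbf{r},\mathbf{s}).
\end{equation*}
Because $e(-2\,{}^t\mathbf{r}\mathbf{s})=\overline{e(2\,{}^t\mathbf{r}\mathbf{s})}=-1$, this reads $\Theta(\mathbf{0},Z;\mathbf{r},\mathbf{s})=-\Theta(\mathbf{0},Z;\mathbf{r},\mathbf{s})$, forcing the zero function.

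For the ``only if'' direction I would assume $\left[\begin{smallmatrix}\mathbf{r}\\\mathbf{s}\end{smallmatrix}\right]\notin\Sigma_-$ and exhibit a point of $\mathfrak{H}_g$ where $\Theta$ is nonzero. The device is the degeneration $Z=itY_0$ with $Y_0$ positive definite and $t\to+\infty$, along which
\begin{equation*}
\Theta(\mathbf{0},itY_0;\mathbf{r},\mathbf{s})
=e^{-\pi\mu t}\Big(\sum_{\mathbf{v}\in S}e({}^t\mathbf{v}\mathbf{s})+o(1)\Big)
\qquad(t\to+\infty),
\end{equation*}
where $\mu=\min_{\mathbf{v}\in\mathbf{r}+\mathbb{Z}^g}{}^t\mathbf{v}Y_0\mathbf{v}$ and $S$ is the finite set of minimizers; it therefore suffices to choose $Y_0$ making the leading coefficient $\sum_{\mathbf{v}\in S}e({}^t\mathbf{v}\mathbf{s})$ nonzero. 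If $\mathbf{r}\notin(1/2)\mathbb{Z}^g$, the coset $\mathbf{r}+\mathbb{Z}^g$ is not stable under $\mathbf{v}\mapsto-\mathbf{v}$, so for generic $Y_0$ no two distinct coset vectors have equal length, the minimizer is unique, and the leading coefficient is a single root of unity $e({}^t\mathbf{v}_0\mathbf{s})\neq0$. If $\mathbf{r},\mathbf{s}\in(1/2)\mathbb{Z}^g$ with $e(2\,{}^t\mathbf{r}\mathbf{s})=+1$, then $S$ can be arranged (by an explicit anisotropic $Y_0$) to be a single pair $\{\pm\mathbf{v}_0\}$, with leading coefficient $2\cos(2\pi\,{}^t\mathbf{v}_0\mathbf{s})$; I would select the sign pattern of $\mathbf{v}_0$ on the coordinates with $r_j\in\tfrac12+\mathbb{Z}$ so that ${}^t\mathbf{v}_0\mathbf{s}\notin\tfrac14+\tfrac12\mathbb{Z}$, and the crux is that this choice is obstructed \emph{only} when an odd number of those coordinates also satisfy $s_j\in\tfrac12+\mathbb{Z}$, i.e.\ exactly when $e(2\,{}^t\mathbf{r}\mathbf{s})=-1$, which is excluded here. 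Finally, if $\mathbf{r}\in(1/2)\mathbb{Z}^g$ but $\mathbf{s}\notin(1/2)\mathbb{Z}^g$, I would apply Proposition \ref{transf} with $\gamma=J$, which satisfies $\{{}^tAC\},\{{}^tBD\}\in2\mathbb{Z}^g$; since the accompanying factor is nonzero and ${}^tJ\left[\begin{smallmatrix}\mathbf{r}\\\mathbf{s}\end{smallmatrix}\right]=\left[\begin{smallmatrix}\mathbf{s}\\-\mathbf{r}\end{smallmatrix}\right]$, the identical vanishing of $\Theta(\mathbf{0},\cdot\,;\mathbf{r},\mathbf{s})$ is equivalent to that of $\Theta(\mathbf{0},\cdot\,;\mathbf{s},-\mathbf{r})$, whose leading characteristic $\mathbf{s}$ is not half-integral, so this case collapses to the first one.

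The \textbf{main obstacle} is precisely the even half-integral case. One is tempted to restrict $\Theta$ to the diagonal $Z=\mathrm{diag}(\tau_1,\dots,\tau_g)$, where it factors as $\prod_{j}\theta(\tau_j;r_j,s_j)$ and the one-variable vanishing criterion is elementary; but this shortcut fails, because an even characteristic may have two or more ``fully odd'' coordinate pairs, so the diagonal restriction vanishes identically while $\Theta$ does not. This is exactly why the argument must use an \emph{anisotropic} $Y_0$ that breaks the symmetry among shortest coset vectors, and why the parity identity $e(2\,{}^t\mathbf{r}\mathbf{s})=(-1)^{\#\{j\,:\,r_j,s_j\in\frac12+\mathbb{Z}\}}$ is what ultimately decides whether a non-cancelling leading term can be produced.
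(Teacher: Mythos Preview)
The paper does not give its own proof of this proposition: it simply cites \cite[Theorem~2]{Igusa}. So there is no paper-argument to compare against; your proposal is a self-contained proof where the paper defers to the literature.

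Your argument is sound. The ``if'' direction is the standard parity computation and is correct as written. For the ``only if'' direction, the degeneration $Z=itY_0$ with $t\to\infty$ together with your trichotomy on $(\mathbf{r},\mathbf{s})$ is the natural approach, and the reduction via $\gamma=J$ in the third case is clean (the hypotheses of Proposition~\ref{transf} are indeed met since $A=D=0$). One simplification is available in your half-integral even case: once $e(2\,{}^t\mathbf{r}\mathbf{s})=+1$ you have ${}^t\mathbf{r}\mathbf{s}\in\tfrac12\mathbb{Z}$, and since $\mathbf{s}\in(1/2)\mathbb{Z}^g$ forces ${}^t\mathbf{v}_0\mathbf{s}\equiv{}^t\mathbf{r}\mathbf{s}\pmod{\tfrac12}$ for \emph{every} $\mathbf{v}_0\in\mathbf{r}+\mathbb{Z}^g$, the leading coefficient $2\cos(2\pi\,{}^t\mathbf{v}_0\mathbf{s})$ is automatically $\pm2$ regardless of which minimizing pair you land on---no sign-pattern selection is needed. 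The genericity step (that some positive definite $Y_0$ produces a unique minimizer, or a single $\pm$-pair, in the coset) is justified by noting that each coincidence ${}^t\mathbf{v}Y_0\mathbf{v}={}^t\mathbf{v}'Y_0\mathbf{v}'$ with $\mathbf{v}'\neq\pm\mathbf{v}$ cuts out a proper hyperplane in the cone of positive definite matrices, and a countable union of such hyperplanes cannot exhaust the cone.
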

\begin{proof}
See \cite[Theorem 2]{Igusa}.
\end{proof}

\section {Modularity of theta constants}\label{quotient}

In this section we shall find a sufficient condition
for a product of theta constants to be a Siegel modular function of
a given even level.
\par
 Let $N$ and $g$ ($\geq2$) be positive integers and
$\mathbf{r},\mathbf{s}\in(1/N)\mathbb{Z}^g$. We define a
\textit{theta constant} by
\begin{equation*}
\Phi_{\left[\begin{smallmatrix}\mathbf{r}\\\mathbf{s}\end{smallmatrix}\right]}(Z)=
\frac{\Theta(\mathbf{0},Z;\mathbf{r},\mathbf{s})}{\Theta(\mathbf{0},Z;\mathbf{0},\mathbf{0})}
\quad(Z\in\mathfrak{H}_g),
\end{equation*}
which is a nonzero function whenever we assume
$\left[\begin{matrix}\mathbf{r}\\\mathbf{s}\end{matrix}\right]\not\in\Sigma_-$
by Proposition \ref{Igusazero}. It belongs to $\mathcal{F}_{2N^2}$
by Propositions \ref{transl}, \ref{transf} and the definition
(\ref{theta}) (or \cite[Proposition 7]{Shimura3}). We get by
(\ref{sign}) that
\begin{equation}\label{minus}
\Phi_{\left[\begin{smallmatrix}-\mathbf{r}\\-\mathbf{s}\end{smallmatrix}\right]}(Z)=
\Phi_{\left[\begin{smallmatrix}\mathbf{r}\\\mathbf{s}\end{smallmatrix}\right]}(Z).
\end{equation}

\begin{lemma}\label{coefficient}
Let $a\in(\mathbb{Z}/2N^2\mathbb{Z})^\times$. The action of
$\iota(a^{-1})=
\left[\begin{matrix}I_g & 0\\
0 &
aI_g\end{matrix}\right]\in\mathrm{GSp}_{2g}(\mathbb{Z}/2N^2\mathbb{Z})$
on
$\Phi_{\left[\begin{smallmatrix}\mathbf{r}\\\mathbf{s}\end{smallmatrix}\right]}(Z)$
can be described as
\begin{equation*}
\Phi_{\left[\begin{smallmatrix}\mathbf{r}\\\mathbf{s}\end{smallmatrix}\right]}(Z)^{\iota(a^{-1})}
=\Phi_{\left[\begin{smallmatrix}\mathbf{r}\\a\mathbf{s}\end{smallmatrix}\right]}(Z)
=\Phi_{{^t}\iota(a^{-1})\left[\begin{smallmatrix}\mathbf{r}\\\mathbf{s}\end{smallmatrix}\right]}(Z).
\end{equation*}
\end{lemma}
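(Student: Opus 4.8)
The plan is to compute the action of $\iota(a^{-1})$ entirely at the level of Fourier expansions, exploiting that Proposition \ref{Groupaction}(i) describes $\iota(a^{-1})$ through a Galois action on Fourier coefficients. Writing $\mathbf{y}=\mathbf{x}+\mathbf{r}$ and setting $\mathbf{u}=\mathbf{0}$ in (\ref{theta}), the exponent of each summand of $\Theta(\mathbf{0},Z;\mathbf{r},\mathbf{s})$ splits as $\tfrac12{}^t\mathbf{y}Z\mathbf{y}+{}^t\mathbf{y}\mathbf{s}$, where only the first term carries the variable $Z$. First I would expand the quadratic form using the symmetry of $Z$ and compare it with the shape of a level-$2N^2$ Fourier expansion recalled in $\S$\ref{Smf}; this matches each lattice point $\mathbf{y}$ to the rank-one symmetric matrix $\xi=N^2\mathbf{y}\,{}^t\mathbf{y}$, whose entries are integers because $\mathbf{y}\in(1/N)\mathbb{Z}^g$. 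Grouping the theta series according to $\xi$ then presents the Fourier coefficient of $\Theta(\mathbf{0},Z;\mathbf{r},\mathbf{s})$ attached to $\xi$ as the finite sum $\sum_{\mathbf{y}}e({}^t\mathbf{y}\mathbf{s})$ taken over those $\mathbf{y}\in\mathbf{r}+\mathbb{Z}^g$ with $N^2\mathbf{y}\,{}^t\mathbf{y}=\xi$.

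The crucial verification is that every phase $e({}^t\mathbf{y}\mathbf{s})$ is a power of $\zeta_{2N^2}$, so that Proposition \ref{Groupaction}(i) is applicable. Since $\mathbf{r},\mathbf{s}\in(1/N)\mathbb{Z}^g$ we have ${}^t\mathbf{y}\mathbf{s}\in(1/N^2)\mathbb{Z}$, hence $2N^2\cdot{}^t\mathbf{y}\mathbf{s}\in\mathbb{Z}$ and $e({}^t\mathbf{y}\mathbf{s})=\zeta_{2N^2}^{\,2N^2\,{}^t\mathbf{y}\mathbf{s}}$. By Proposition \ref{Groupaction}(i) the element $\iota(a^{-1})$ acts on a Fourier coefficient $c(\xi)$ by $c(\xi)\mapsto c(\xi)^{\rho(a^{-1})^{-1}}$, and since $\rho$ is a homomorphism we have $\rho(a^{-1})^{-1}=\rho(a)$, the automorphism $\zeta_{2N^2}\mapsto\zeta_{2N^2}^{\,a}$. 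Applied to the coefficients above, $\rho(a)$ sends each $e({}^t\mathbf{y}\mathbf{s})$ to $e(a\,{}^t\mathbf{y}\mathbf{s})=e({}^t\mathbf{y}(a\mathbf{s}))$, which is precisely the coefficient of $\Theta(\mathbf{0},Z;\mathbf{r},a\mathbf{s})$ indexed by the same $\xi$. Thus, at the level of Fourier series, $\rho(a)$ carries $\Theta(\mathbf{0},Z;\mathbf{r},\mathbf{s})$ to $\Theta(\mathbf{0},Z;\mathbf{r},a\mathbf{s})$.

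To descend to the quotient $\Phi$, I would use that this coefficient-wise Galois action is multiplicative, so it may be performed on the numerator and denominator of $\Phi_{\left[\begin{smallmatrix}\mathbf{r}\\\mathbf{s}\end{smallmatrix}\right]}(Z)=\Theta(\mathbf{0},Z;\mathbf{r},\mathbf{s})/\Theta(\mathbf{0},Z;\mathbf{0},\mathbf{0})$ separately. The denominator is fixed because $\mathbf{s}=\mathbf{0}$ forces $a\mathbf{s}=\mathbf{0}$, while the previous paragraph transforms the numerator into $\Theta(\mathbf{0},Z;\mathbf{r},a\mathbf{s})$; this yields the first equality $\Phi_{\left[\begin{smallmatrix}\mathbf{r}\\\mathbf{s}\end{smallmatrix}\right]}(Z)^{\iota(a^{-1})}=\Phi_{\left[\begin{smallmatrix}\mathbf{r}\\a\mathbf{s}\end{smallmatrix}\right]}(Z)$. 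The second equality is then the routine block computation: since $\iota(a^{-1})=\left[\begin{smallmatrix}I_g&0\\0&aI_g\end{smallmatrix}\right]$ is symmetric, ${}^t\iota(a^{-1})\left[\begin{smallmatrix}\mathbf{r}\\\mathbf{s}\end{smallmatrix}\right]=\left[\begin{smallmatrix}\mathbf{r}\\a\mathbf{s}\end{smallmatrix}\right]$.

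I expect the genuine effort to be bookkeeping rather than conceptual. The two points to get right are the integrality $2N^2\,{}^t\mathbf{y}\mathbf{s}\in\mathbb{Z}$, which is what legitimizes invoking Proposition \ref{Groupaction}(i), and the orientation of the identity $\rho(a^{-1})^{-1}=\rho(a)$. In addition one must note that several $\mathbf{y}$ may collapse onto the same rank-one $\xi$, but this causes no difficulty because $\rho(a)$ acts term by term on the finite coefficient sum.
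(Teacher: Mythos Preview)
Your proposal is correct and follows essentially the same route as the paper: both compute the action of $\iota(a^{-1})$ by applying the Galois automorphism $\rho(a)$ to the Fourier coefficients, observing that each coefficient $e({}^t(\mathbf{x}+\mathbf{r})\mathbf{s})$ is sent to $e({}^t(\mathbf{x}+\mathbf{r})(a\mathbf{s}))$ while the $Z$-dependent part and the denominator $\Theta(\mathbf{0},Z;\mathbf{0},\mathbf{0})$ are left unchanged. Your version is more scrupulous about matching the theta sum with the formal Fourier expansion indexed by $\xi$ and about the orientation $\rho(a^{-1})^{-1}=\rho(a)$, whereas the paper simply applies Proposition~\ref{Groupaction}(i) term by term; the paper also adds a one-line remark (via Proposition~\ref{transl}) that $\Phi_{\left[\begin{smallmatrix}\mathbf{r}\\a\mathbf{s}\end{smallmatrix}\right]}(Z)$ is well defined independently of the representative of $a$.
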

\begin{proof}
We see from Proposition \ref{transl} that
$\Phi_{\left[\begin{smallmatrix}\mathbf{r}\\a\mathbf{s}\end{smallmatrix}\right]}(Z)$
and
$\Phi_{{^t}\iota(a^{-1})\left[\begin{smallmatrix}\mathbf{r}\\\mathbf{s}\end{smallmatrix}\right]}(Z)$
are well-defined. And, it follows from the definition (\ref{theta})
that
\begin{eqnarray*}
\Phi_{\left[\begin{smallmatrix}\mathbf{r}\\\mathbf{s}\end{smallmatrix}\right]}(Z)^{\iota(a^{-1})}
&=&\bigg(\frac{\sum_{\mathbf{x}\in\mathbb{Z}^g}
e({^t}(\mathbf{x}+\mathbf{r})\mathbf{s}) e({^t}(\mathbf{x}+
\mathbf{r})Z(\mathbf{x}+\mathbf{r})/2)}{\sum_{\mathbf{x}\in\mathbb{Z}^g}
e(({^t}\mathbf{x}Z\mathbf{x})/2)}\bigg)^{\iota(a^{-1})}\\
&=&\frac{\sum_{\mathbf{x}\in\mathbb{Z}^g}
e({^t}(\mathbf{x}+\mathbf{r})a\mathbf{s}) e({^t}(\mathbf{x}+
\mathbf{r})Z(\mathbf{x}+\mathbf{r})/2)}{\sum_{\mathbf{x}\in\mathbb{Z}^g}
e(({^t}\mathbf{x}Z\mathbf{x})/2)}\quad\textrm{by Proposition
\ref{Groupaction}(i)}\\
&=&\Phi_{\left[\begin{smallmatrix}\mathbf{r}\\a\mathbf{s}\end{smallmatrix}\right]}(Z)
\\&=&\Phi_{{^t}\iota(a^{-1})\left[\begin{smallmatrix}\mathbf{r}\\\mathbf{s}\end{smallmatrix}\right]}(Z).
\end{eqnarray*}
\end{proof}

\begin{lemma}\label{composition}
For even $N$, let $\gamma=I_{2g}+N\left[\begin{matrix} A_0 &
B_0\\C_0&D_0
\end{matrix}\right]\in\Gamma(N)$ with $A_0,B_0,C_0,D_0\in\mathrm{Mat}_g(\mathbb{Z})$. Then we
have
\begin{eqnarray*}
\Phi_{\left[\begin{smallmatrix}\mathbf{r}\\\mathbf{s}\end{smallmatrix}\right]}(\gamma(Z))&=&e\bigg(-\frac{1}{2N}{^t}(N\mathbf{r})(-{^t}B_0+NA_0{^t}B_0)(N\mathbf{r})
-\frac{1}{2N}{^t}(N\mathbf{s})(C_0+NC_0{^t}D_0)(N\mathbf{s})\\
&&-\frac{1}{N}{^t}(N\mathbf{r})
(A_0+(N/2)(A_0{^t}D_0+{^t}D_0A_0+B_0{^t}C_0-{^t}B_0C_0)(N\mathbf{s})
\bigg)
\Phi_{\left[\begin{smallmatrix}\mathbf{r}\\\mathbf{s}\end{smallmatrix}\right]}(Z).
\end{eqnarray*}
\end{lemma}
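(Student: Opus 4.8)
The plan is to write $\Phi_{\left[\begin{smallmatrix}\mathbf{r}\\\mathbf{s}\end{smallmatrix}\right]}(\gamma(Z))=\Theta(\mathbf{0},\gamma(Z);\mathbf{r},\mathbf{s})/\Theta(\mathbf{0},\gamma(Z);\mathbf{0},\mathbf{0})$ and to transform numerator and denominator separately by Proposition \ref{transf}. Before doing so I must check the parity hypothesis $\{{^t}AC\},\{{^t}BD\}\in2\mathbb{Z}^g$ for $A=I_g+NA_0$, $B=NB_0$, $C=NC_0$, $D=I_g+ND_0$; a direct expansion gives $\{{^t}AC\}=N\{C_0\}+N^2\{{^t}A_0C_0\}$ and $\{{^t}BD\}=N\{{^t}B_0\}+N^2\{{^t}B_0D_0\}$, both of which lie in $2\mathbb{Z}^g$ exactly because $N$ is even, which is the only place the evenness assumption enters. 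Taking $\mathbf{u}=\mathbf{0}$ in Proposition \ref{transf} kills the factor $e(({^t}\mathbf{u}(CZ+D)^{-1}C\mathbf{u})/2)$; moreover ${^t}\gamma$ fixes $\left[\begin{smallmatrix}\mathbf{0}\\\mathbf{0}\end{smallmatrix}\right]$, so the denominator transforms to $\lambda_\gamma\det(CZ+D)^{1/2}\Theta(\mathbf{0},Z;\mathbf{0},\mathbf{0})$. Since $\lambda_\gamma\det(CZ+D)^{1/2}$ is common to both thetas, it cancels in the quotient and leaves
\[
\Phi_{\left[\begin{smallmatrix}\mathbf{r}\\\mathbf{s}\end{smallmatrix}\right]}(\gamma(Z))=e\big(({^t}\mathbf{r}\mathbf{s}-{^t}\mathbf{r}'\mathbf{s}')/2\big)\,\frac{\Theta(\mathbf{0},Z;\mathbf{r}',\mathbf{s}')}{\Theta(\mathbf{0},Z;\mathbf{0},\mathbf{0})},\qquad\left[\begin{smallmatrix}\mathbf{r}'\\\mathbf{s}'\end{smallmatrix}\right]={^t}\gamma\left[\begin{smallmatrix}\mathbf{r}\\\mathbf{s}\end{smallmatrix}\right].
\]

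Next I would compute ${^t}\gamma=I_{2g}+N\left[\begin{smallmatrix}{^t}A_0&{^t}C_0\\{^t}B_0&{^t}D_0\end{smallmatrix}\right]$, so that $\mathbf{r}'=\mathbf{r}+\mathbf{a}$ and $\mathbf{s}'=\mathbf{s}+\mathbf{b}$ with $\mathbf{a}={^t}A_0(N\mathbf{r})+{^t}C_0(N\mathbf{s})$ and $\mathbf{b}={^t}B_0(N\mathbf{r})+{^t}D_0(N\mathbf{s})$. Because $N\mathbf{r},N\mathbf{s}\in\mathbb{Z}^g$, both $\mathbf{a}$ and $\mathbf{b}$ are integral, so the translation formula of Proposition \ref{transl} applies and gives $\Theta(\mathbf{0},Z;\mathbf{r}',\mathbf{s}')=e({^t}\mathbf{r}\mathbf{b})\,\Theta(\mathbf{0},Z;\mathbf{r},\mathbf{s})$. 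Substituting back yields
\[
\Phi_{\left[\begin{smallmatrix}\mathbf{r}\\\mathbf{s}\end{smallmatrix}\right]}(\gamma(Z))=e\big(({^t}\mathbf{r}\mathbf{s}-{^t}\mathbf{r}'\mathbf{s}')/2+{^t}\mathbf{r}\mathbf{b}\big)\,\Phi_{\left[\begin{smallmatrix}\mathbf{r}\\\mathbf{s}\end{smallmatrix}\right]}(Z),
\]
and, using ${^t}\mathbf{r}'\mathbf{s}'={^t}\mathbf{r}\mathbf{s}+{^t}\mathbf{r}\mathbf{b}+{^t}\mathbf{a}\mathbf{s}+{^t}\mathbf{a}\mathbf{b}$, the exponent simplifies to $E=\tfrac12({^t}\mathbf{r}\mathbf{b}-{^t}\mathbf{a}\mathbf{s}-{^t}\mathbf{a}\mathbf{b})$.

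It then remains to expand $E$ in terms of $A_0,B_0,C_0,D_0$ and $N\mathbf{r},N\mathbf{s}$ and to match it with the claimed exponent. The purely quadratic pieces in $N\mathbf{r}$ and in $N\mathbf{s}$ reproduce the first two terms of the statement verbatim, so the delicate part—and the main obstacle—is the mixed term, where one must carefully transpose scalars such as ${^t}(N\mathbf{s})C_0\,{^t}B_0(N\mathbf{r})={^t}(N\mathbf{r})B_0\,{^t}C_0(N\mathbf{s})$. Comparing with the asserted mixed term leaves a discrepancy of $-\tfrac{1}{2N}{^t}(N\mathbf{r})(A_0+{^t}D_0)(N\mathbf{s})-\tfrac12{^t}(N\mathbf{r})({^t}D_0A_0-{^t}B_0C_0)(N\mathbf{s})$, which I would show vanishes by invoking the symplectic relation. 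Indeed, writing out ${^t}AD-{^t}CB=I_g$ for $\gamma\in\mathrm{Sp}_{2g}(\mathbb{Z})$ and cancelling the common factor $N$ gives $A_0+{^t}D_0=N({^t}B_0C_0-{^t}D_0A_0)$; substituting this into the first half of the discrepancy turns it into $-\tfrac12{^t}(N\mathbf{r})({^t}B_0C_0-{^t}D_0A_0)(N\mathbf{s})$, which exactly cancels the second half. This confirms that $E$ equals the claimed exponent and completes the proof.
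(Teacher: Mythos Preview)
Your proposal is correct and follows essentially the same route as the paper: apply Proposition~\ref{transf} (the common factor $\lambda_\gamma\det(CZ+D)^{1/2}$ cancels in the quotient), then Proposition~\ref{transl} to bring the characteristic back to $\left[\begin{smallmatrix}\mathbf{r}\\\mathbf{s}\end{smallmatrix}\right]$, and finally the symplectic identity ${^t}AD-{^t}CB=I_g$ in the form $A_0+{^t}D_0=N({^t}B_0C_0-{^t}D_0A_0)$ (the transpose of the paper's relation~(\ref{even})) to reconcile the mixed term. The only cosmetic difference is that you verify the parity hypothesis $\{{^t}AC\},\{{^t}BD\}\in2\mathbb{Z}^g$ explicitly, whereas the paper leaves this implicit.
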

\begin{proof}
We obtain from the relation ${^t}\gamma J\gamma=J$ that
\begin{equation*}
\left[\begin{matrix}
* & -I_g-N{^t}A_0-ND_0-(N{^t}A_0)(ND_0)+(N{^t}C_0)(NB_0)\\ * & *
\end{matrix}\right]=\left[\begin{matrix}
0 & -I_g\\I_g & 0
\end{matrix}\right],
\end{equation*}
which gives rise to
\begin{equation}\label{even}
D_0=-{^t}A_0-N{^t}A_0D_0+N{^t}C_0B_0.
\end{equation}
We then derive that
\begin{eqnarray*}
\Phi_{\left[\begin{smallmatrix}\mathbf{r}\\\mathbf{s}\end{smallmatrix}\right]}(\gamma(Z))
&=&\frac{\Theta(\mathbf{0},\gamma(Z);\mathbf{r},\mathbf{s})}
{\Theta(\mathbf{0},\gamma(Z);\mathbf{0},\mathbf{0})}\\
&=&\frac{e(({^t}\mathbf{r}\mathbf{s}-{^t}\mathbf{r}'\mathbf{s}')/2)
\Theta(\mathbf{0},Z;\mathbf{r}',\mathbf{s}')}
{\Theta(\mathbf{0},Z;\mathbf{0},\mathbf{0})}\quad\textrm{where
$\left[\begin{matrix}\mathbf{r}'\\\mathbf{s}'\end{matrix}\right]=
{^t}\gamma\left[\begin{matrix}\mathbf{r}\\\mathbf{s}\end{matrix}\right]$,
by Proposition \ref{transf}}\\
&=&
e\bigg(-\frac{1}{2N}{^t}(N\mathbf{r})({^t}B_0+NA_0{^t}B_0)(N\mathbf{r})
-\frac{1}{2N}{^t}(N\mathbf{s})(C_0+NC_0{^t}D_0)(N\mathbf{s})\\
&&-\frac{1}{2N}{^t}(N\mathbf{r})(A_0+{^t}D_0+NA_0{^t}D_0+NB_0{^t}C_0)(N\mathbf{s})
\bigg)
\Phi_{\left[\begin{smallmatrix}\mathbf{r}+{^t}A_0(N\mathbf{r})+{^t}
C_0(N\mathbf{s})\\\mathbf{s}+{^t}B_0(N\mathbf{r})+
{^t}D_0(N\mathbf{s})\end{smallmatrix}\right]}(Z)\\
&=&e\bigg(-\frac{1}{2N}{^t}(N\mathbf{r})(-{^t}B_0+NA_0{^t}B_0)(N\mathbf{r})
-\frac{1}{2N}{^t}(N\mathbf{s})(C_0+NC_0{^t}D_0)(N\mathbf{s})\\
&&-\frac{1}{2N}{^t}(N\mathbf{r})(A_0-{^t}D_0+NA_0{^t}D_0+NB_0{^t}C_0)(N\mathbf{s})
\bigg)
\Phi_{\left[\begin{smallmatrix}\mathbf{r}\\\mathbf{s}\end{smallmatrix}\right]}(Z)\quad\textrm{by Proposition \ref{transl}}\\
&=&e\bigg(-\frac{1}{2N}{^t}(N\mathbf{r})(-{^t}B_0+NA_0{^t}B_0)(N\mathbf{r})
-\frac{1}{2N}{^t}(N\mathbf{s})(C_0+NC_0{^t}D_0)(N\mathbf{s})\\
&&-\frac{1}{N}{^t}(N\mathbf{r})
(A_0+(N/2)(A_0{^t}D_0+{^t}D_0A_0+B_0{^t}C_0-{^t}B_0C_0)(N\mathbf{s})
\bigg)
\Phi_{\left[\begin{smallmatrix}\mathbf{r}\\\mathbf{s}\end{smallmatrix}\right]}(Z)\quad\textrm{by
(\ref{even})}.
\end{eqnarray*}
\end{proof}

\begin{theorem}\label{modularity}
For even $N$, let
$\{m(\mathbf{r},\mathbf{s})\}_{\mathbf{r},\mathbf{s}}$, where
$\mathbf{r},\mathbf{s}\in(1/N)\mathbb{Z}^g$ such that
$\left[\begin{matrix}\mathbf{r}\\\mathbf{s}\end{matrix}\right]\not\in\Sigma_-$,
be a family of integers such that $m(\mathbf{r},\mathbf{s})=0$
except finitely many pairs of $\mathbf{r},\mathbf{s}$. Consider the
following product
\begin{equation*}
\Phi(Z)=\prod_{\mathbf{r},\mathbf{s}}
\Phi_{\left[\begin{smallmatrix}\mathbf{r}\\\mathbf{s}\end{smallmatrix}\right]}(Z)^{m(\mathbf{r},\mathbf{s})}.
\end{equation*}
Then, $\Phi(Z)$ belongs to $\mathcal{F}_N$ if the family
$\{m(\mathbf{r},\mathbf{s})\}_{\mathbf{r},\mathbf{s}}$ satisfies the
condition
\begin{equation}\label{assumption}
\left\{\begin{array}{l}
\displaystyle\sum_{\mathbf{r},\mathbf{s}}m(\mathbf{r},\mathbf{s})
(N\mathbf{r}_j)(N\mathbf{r}_k) \equiv
\sum_{\mathbf{r},\mathbf{s}}m(\mathbf{r},\mathbf{s})
(N\mathbf{s}_j)(N\mathbf{s}_k)\equiv
0\pmod{2N}\quad(1\leq j,k\leq g),\vspace{0.2cm}\\
\displaystyle\sum_{\mathbf{r},\mathbf{s}}m(\mathbf{r},\mathbf{s})(N\mathbf{r}_j)(N\mathbf{s}_k)\equiv
0\pmod{N}\quad(1\leq j,k\leq g),
\end{array}\right.
\end{equation}
where
$\mathbf{r}=\left[\begin{matrix}\mathbf{r}_1\\\vdots\\\mathbf{r}_g\end{matrix}\right]$
and
$\mathbf{s}=\left[\begin{matrix}\mathbf{s}_1\\\vdots\\\mathbf{s}_g\end{matrix}\right]$.
\end{theorem}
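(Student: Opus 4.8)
The plan is to characterise membership in $\mathcal{F}_N$ as invariance of $\Phi$ under the two families of operations that carve $\mathcal{F}_N$ out of $\mathcal{F}_{2N^2}$: composition with the elements of $\Gamma(N)$, which encodes modularity of level $N$, and the action of $\iota(a^{-1})$ with $a\equiv1\pmod{N}$, which by Proposition \ref{Groupaction}(i) realises $\mathrm{Gal}(\mathbb{Q}(\zeta_{2N^2})/\mathbb{Q}(\zeta_N))$ on Fourier coefficients and hence encodes rationality over $\mathbb{Q}(\zeta_N)$. Since each factor $\Phi_{\left[\begin{smallmatrix}\mathbf{r}\\\mathbf{s}\end{smallmatrix}\right]}(Z)$ lies in $\mathcal{F}_{2N^2}$ and is nonzero because $\left[\begin{smallmatrix}\mathbf{r}\\\mathbf{s}\end{smallmatrix}\right]\notin\Sigma_-$ (Proposition \ref{Igusazero}), the product $\Phi(Z)$ is a nonzero element of $\mathcal{F}_{2N^2}$, and so $\Phi(Z)\in\mathcal{F}_N$ if and only if $\Phi\circ\gamma=\Phi$ for every $\gamma\in\Gamma(N)$ and $\Phi^{\iota(a^{-1})}=\Phi$ for every $a\equiv1\pmod{N}$. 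In either case Lemmas \ref{composition} and \ref{coefficient} show that the operation multiplies $\Phi$ by a single root of unity, so that being fixed is equivalent to the vanishing modulo $\mathbb{Z}$ of $\sum_{\mathbf{r},\mathbf{s}}m(\mathbf{r},\mathbf{s})$ times the phase produced by the relevant lemma; the whole proof is the translation of these integrality statements into (\ref{assumption}).

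For the necessity of (\ref{assumption}) I would test $\Phi$ against the three explicit members of $\Gamma(N)$ supplied by Lemma \ref{special}. Substituting $\left[\begin{smallmatrix}I_g&NA_0\\0&I_g\end{smallmatrix}\right]$ into Lemma \ref{composition} kills every term except the $\mathbf{r}$-quadratic one, and letting $A_0$ run through the elementary matrices forces $\sum_{\mathbf{r},\mathbf{s}}m(\mathbf{r},\mathbf{s})(N\mathbf{r}_j)(N\mathbf{r}_k)\equiv0\pmod{2N}$; the transpose matrix $\left[\begin{smallmatrix}I_g&0\\NA_0&I_g\end{smallmatrix}\right]$ yields the companion congruence in $\mathbf{s}$. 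The third matrix, equal to $I_{2g}+N\left[\begin{smallmatrix}-A_0&A_0\\-A_0&A_0\end{smallmatrix}\right]$, is the decisive one: a direct computation shows that the correction matrix $A_0{^t}D_0+{^t}D_0A_0+B_0{^t}C_0-{^t}B_0C_0$ occurring in Lemma \ref{composition} collapses to $-2A_0{^t}A_0$ for this choice, so that, after the two quadratic congruences already obtained are used to absorb the $\mathbf{r}$- and $\mathbf{s}$-terms into $\mathbb{Z}$, the mixed term reduces modulo $\mathbb{Z}$ to $\frac{1}{N}\sum_{\mathbf{r},\mathbf{s}}m(\mathbf{r},\mathbf{s}){^t}(N\mathbf{r})A_0(N\mathbf{s})$; varying $A_0$ gives the mixed congruence modulo $N$. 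The operator $\iota(a^{-1})$, evaluated through Lemma \ref{coefficient} and Proposition \ref{transl}, multiplies $\Phi$ by $e((a-1)\sum_{\mathbf{r},\mathbf{s}}m(\mathbf{r},\mathbf{s}){^t}\mathbf{r}\mathbf{s})$, which only reproduces the trace of the mixed congruence and is therefore already subsumed.

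For sufficiency I would instead check invariance under a \emph{general} $\gamma=I_{2g}+N\left[\begin{smallmatrix}A_0&B_0\\C_0&D_0\end{smallmatrix}\right]\in\Gamma(N)$, which sidesteps having to prove that the three matrices above generate the congruence subgroup. Granting (\ref{assumption}), Lemma \ref{composition} shows that the $\mathbf{r}$- and $\mathbf{s}$-quadratic terms are integral---the congruences modulo $2N$ are exactly what clears the $\frac{1}{2N}$ denominators, including the half-integer pieces coming from the $NA_0{^t}B_0$-type contributions---while the mixed term is integral because $N$ is even: the congruence modulo $N$ forces every $\sum_{\mathbf{r},\mathbf{s}}m(\mathbf{r},\mathbf{s})(N\mathbf{r}_j)(N\mathbf{s}_k)$ to be even, which is precisely what clears the $\frac{1}{2}$ produced by the $\frac{N}{2}$-correction. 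The same mixed congruence makes the root of unity $e((a-1)\sum_{\mathbf{r},\mathbf{s}}m(\mathbf{r},\mathbf{s}){^t}\mathbf{r}\mathbf{s})$ attached to $\iota(a^{-1})$ equal to $1$, so the Fourier coefficients of $\Phi$ lie in $\mathbb{Q}(\zeta_N)$; together with invariance under $\Gamma(N)$ this places $\Phi(Z)$ in $\mathcal{F}_N$.

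The heart of the matter, and the point at which the evenness of $N$ is indispensable, is the simultaneous bookkeeping of two different moduli: the quadratic conditions must hold modulo $2N$ while the mixed condition holds only modulo $N$, and the formula of Lemma \ref{composition} manufactures genuine $\frac{1}{2N}$- and $\frac{1}{N}$-denominators alongside spurious $\frac{1}{2}$-denominators from the $N/2$-corrections. The argument succeeds exactly because (\ref{assumption}), under $N$ even, is calibrated to clear every one of these denominators and no stronger, which is what makes the three congruences simultaneously necessary and sufficient.
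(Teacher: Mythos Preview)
Your proposal is correct and follows essentially the same route as the paper: sufficiency is checked by feeding a general $\gamma\in\Gamma(N)$ into Lemma~\ref{composition} and verifying that (\ref{assumption}) kills the phase, then handling the Fourier-coefficient field via Lemma~\ref{coefficient} and Proposition~\ref{transl}; necessity is obtained by specialising to the three test matrices of Lemma~\ref{special} with $A_0=E_{jk}$, using the first two to extract the quadratic congruences and the third (after cancelling the already-known quadratic pieces) to extract the mixed one. Your framing of $\mathcal{F}_N\subset\mathcal{F}_{2N^2}$ as the joint fixed field of $\Gamma(N)$ and $\iota(a^{-1})$ for $a\equiv1\pmod{N}$ makes the logic slightly more explicit than the paper's presentation, but the computations and the role of the evenness of $N$ are identical.
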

\begin{proof}
If $\gamma\in\Gamma(N)$, then we have
\begin{eqnarray}
\Phi(\gamma(Z))&=&\prod_{\mathbf{r},\mathbf{s}}
\bigg(\frac{\Theta(\mathbf{0},\gamma(Z);\mathbf{r},\mathbf{s})}
{\Theta(\mathbf{0},\gamma(Z);\mathbf{0},\mathbf{0})}\bigg)^{m(\mathbf{r},\mathbf{s})}\nonumber\\
&=&e\bigg(
-\frac{1}{2N}\sum_{\mathbf{r},\mathbf{s}}m(\mathbf{r},\mathbf{s})
{^t}(N\mathbf{r})(-{^t}B_0+NA_0{^t}B_0)(N\mathbf{r})\nonumber\\
&&-\frac{1}{2N}\sum_{\mathbf{r},\mathbf{s}}
m(\mathbf{r},\mathbf{s}){^t}(N\mathbf{s})(C_0+NC_0{^t}D_0)(N\mathbf{s})\nonumber\\
&&-\frac{1}{N}\sum_{\mathbf{r},\mathbf{s}}
m(\mathbf{r},\mathbf{s}){^t}(N\mathbf{r})(A_0+(N/2)(A_0{^t}D_0
+{^t}D_0A_0+B_0{^t}C_0 -{^t}B_0C_0))(N\mathbf{s})
\bigg)\Phi(Z)\nonumber\\
&&\textrm{where}~\left[\begin{matrix} A_0 & B_0\\C_0&D_0
\end{matrix}\right]=\displaystyle\frac{1}{N}(\gamma-I_{2g})
\in\mathrm{Mat}_{2g}(\mathbb{Z}),~\textrm{by Lemma
\ref{composition}}.\label{derive}
\end{eqnarray}
Now, for every pair of integers $j,k$ with $1\leq j,k\leq g$ let
$E_{jk}$ be the $g\times g$ matrix whose entries are all zeros
except for the $(j,k)$th entry which is $1$. One can then easily see
that $\mathrm{Mat}_g(\mathbb{Z})$ is generated by $E_{jk}$ ($1\leq
j,k\leq g$) as $\mathbb{Z}$-module, and if
$\mathbf{u}=\left[\begin{matrix}\mathbf{u}_1\\\vdots\\\mathbf{u}_g\end{matrix}\right]$,
$\mathbf{v}=\left[\begin{matrix}\mathbf{v}_1\\\vdots\\\mathbf{v}_g\end{matrix}\right]\in\mathbb{Z}^g$,
then ${^t}\mathbf{u}E_{jk}\mathbf{v}=\mathbf{u}_j\mathbf{v}_k$.
\par
Assume first that the family
$\{m(\mathbf{r},\mathbf{s})\}_{\mathbf{r},\mathbf{s}}$ satisfies the
condition (\ref{assumption}). Then the above observation leads to
$\Phi(\gamma(\tau))=\Phi(\tau)$. On the other hand, since $\Phi(Z)$
belongs to $\mathcal{F}_{2N^2}$, its Fourier coefficients of
$\Phi(Z)$ lie in $\mathbb{Q}(\zeta_{2N^2})$. However, at this stage
we have to show that the coefficients actually lie in
$\mathbb{Q}(\zeta_N)$. To this end, let $a$ be an integer such that
$a\equiv1\pmod{N}$, which can be written as $a=1+cN$ for some
integer $c$. Regarding $\iota$ as a map on
$(\mathbb{Z}/2N^2\mathbb{Z})^\times$ we get that
\begin{eqnarray*}
\Phi(Z)^{\iota(a)^{-1}}&=&\prod_{\mathbf{r},\mathbf{s}}
(\Phi_{\left[\begin{smallmatrix}\mathbf{r}\\\mathbf{s}\end{smallmatrix}\right]}(Z)^{\iota(a)^{-1}})^{m(\mathbf{r},\mathbf{s})}\\
&=& \prod_{\mathbf{r},\mathbf{s}}
\Phi_{\left[\begin{smallmatrix}\mathbf{r}\\a\mathbf{s}\end{smallmatrix}\right]}(Z)^{m(\mathbf{r},\mathbf{s})}
\quad\textrm{by Lemma \ref{coefficient}}\\
&=& \prod_{\mathbf{r},\mathbf{s}}
\Phi_{\left[\begin{smallmatrix}\mathbf{r}\\\mathbf{s}+cN\mathbf{s}\end{smallmatrix}\right]}(Z)^{m(\mathbf{r},\mathbf{s})}\\
&=&\prod_{\mathbf{r},\mathbf{s}}
(e({^t}\mathbf{r}cN\mathbf{s})\Phi_{\left[\begin{smallmatrix}\mathbf{r}
\\\mathbf{s}\end{smallmatrix}\right]}(Z))^{m(\mathbf{r},\mathbf{s})}
\quad\textrm{by Proposition \ref{transl}}\\
&=&e\bigg( \frac{c}{N}\sum_{\mathbf{r},\mathbf{s}}
m(\mathbf{r},\mathbf{s}){^t}(N\mathbf{r})(N\mathbf{s}) \bigg)\Phi(Z)\\
&=&e\bigg( \frac{c}{N}\sum_{j=1}^g\sum_{\mathbf{r},\mathbf{s}}
m(\mathbf{r},\mathbf{s})(N\mathbf{r}_j)(N\mathbf{s}_j) \bigg)\Phi(Z)\\
&=&\Phi(Z)\quad\textrm{by the condition (\ref{assumption})},
\end{eqnarray*}
which ensures that Fourier coefficients of $\Phi(Z)$ lie in
$\mathbb{Q}(\zeta_N)$ as desired. Therefore we conclude that
$\Phi(Z)$ is in $\mathcal{F}_N$.
\end{proof}

\section {Family of theta constants}

Let $N$ be an even positive integer and
$\mathbf{r},\mathbf{s}\in(1/N)\mathbb{Z}^g$ ($g\geq2$). By Theorem
\ref{modularity},
$\Phi_{\left[\begin{smallmatrix}\mathbf{r}\\\mathbf{s}\end{smallmatrix}\right]}(Z)^{2N}$
belongs to $\mathcal{F}_{N}$. In this section we shall show that a
subgroup of $\mathrm{GSp}_{2g}(\mathbb{Z}/N\mathbb{Z})$ has a
natural action on the family
$\{\Phi_{\left[\begin{smallmatrix}\mathbf{r}\\\mathbf{s}\end{smallmatrix}\right]}(Z)^{2N^2}\}
_{\mathbf{r},\mathbf{s}\in(1/N)\mathbb{Z}^g/\mathbb{Z}^g}$.

\begin{lemma}\label{determine}
Let $M$ and $\ell$ be divisors of $N$. If
$\mathbf{r}\in(1/M)\mathbb{Z}^g$, then
$\Phi_{\left[\begin{smallmatrix}\mathbf{r}\\\mathbf{s}\end{smallmatrix}\right]}(Z)^\ell$
is determined by $\mathbf{r}\pmod{\mathbb{Z}^g}$ and $\mathbf{s}
\pmod{(M/\gcd(M,\ell))\mathbb{Z}^g}$.
\end{lemma}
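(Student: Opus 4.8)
The plan is to derive everything from the translation formula of Proposition~\ref{transl}. Setting $\mathbf{u}=\mathbf{0}$ there and dividing through by $\Theta(\mathbf{0},Z;\mathbf{0},\mathbf{0})$ gives, for all $\mathbf{a},\mathbf{b}\in\mathbb{Z}^g$,
\[
\Phi_{\left[\begin{smallmatrix}\mathbf{r}+\mathbf{a}\\\mathbf{s}+\mathbf{b}\end{smallmatrix}\right]}(Z)
=e({^t}\mathbf{r}\mathbf{b})\,\Phi_{\left[\begin{smallmatrix}\mathbf{r}\\\mathbf{s}\end{smallmatrix}\right]}(Z).
\]
The two assertions of the lemma then reduce to tracking the scalar $e({^t}\mathbf{r}\mathbf{b})$ in the two independent cases $\mathbf{b}=\mathbf{0}$ (translation in $\mathbf{r}$) and $\mathbf{a}=\mathbf{0}$ (translation in $\mathbf{s}$).

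First I would dispose of the dependence on $\mathbf{r}$. Choosing $\mathbf{b}=\mathbf{0}$ above makes the prefactor $e({^t}\mathbf{r}\,\mathbf{0})=1$, so $\Phi_{\left[\begin{smallmatrix}\mathbf{r}+\mathbf{a}\\\mathbf{s}\end{smallmatrix}\right]}(Z)=\Phi_{\left[\begin{smallmatrix}\mathbf{r}\\\mathbf{s}\end{smallmatrix}\right]}(Z)$ for every $\mathbf{a}\in\mathbb{Z}^g$. Thus the theta constant itself, and a fortiori its $\ell$-th power, depends only on $\mathbf{r}\pmod{\mathbb{Z}^g}$. This half needs no hypothesis on $M$ or $\ell$.

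The content of the lemma is the dependence on $\mathbf{s}$. Taking $\mathbf{a}=\mathbf{0}$ and raising to the $\ell$-th power gives
\[
\Phi_{\left[\begin{smallmatrix}\mathbf{r}\\\mathbf{s}+\mathbf{b}\end{smallmatrix}\right]}(Z)^\ell
=e(\ell\,{^t}\mathbf{r}\mathbf{b})\,\Phi_{\left[\begin{smallmatrix}\mathbf{r}\\\mathbf{s}\end{smallmatrix}\right]}(Z)^\ell,
\]
so invariance is equivalent to $\ell\,{^t}\mathbf{r}\mathbf{b}\in\mathbb{Z}$. I would then restrict $\mathbf{b}$ to the lattice $(M/\gcd(M,\ell))\mathbb{Z}^g$; since $\gcd(M,\ell)\mid M$ this is a sublattice of $\mathbb{Z}^g$, so Proposition~\ref{transl} legitimately applies. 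Writing $\mathbf{r}=\mathbf{r}'/M$ with $\mathbf{r}'\in\mathbb{Z}^g$ (using $\mathbf{r}\in(1/M)\mathbb{Z}^g$) and $\mathbf{b}=(M/\gcd(M,\ell))\mathbf{c}$ with $\mathbf{c}\in\mathbb{Z}^g$, the exponent collapses to
\[
\ell\,{^t}\mathbf{r}\mathbf{b}=\frac{\ell}{\gcd(M,\ell)}\,{^t}\mathbf{r}'\mathbf{c},
\]
which is an integer because $\gcd(M,\ell)\mid\ell$ and ${^t}\mathbf{r}'\mathbf{c}\in\mathbb{Z}$. Hence the prefactor is $1$ and $\Phi_{\left[\begin{smallmatrix}\mathbf{r}\\\mathbf{s}+\mathbf{b}\end{smallmatrix}\right]}(Z)^\ell=\Phi_{\left[\begin{smallmatrix}\mathbf{r}\\\mathbf{s}\end{smallmatrix}\right]}(Z)^\ell$, which is exactly the claimed dependence on $\mathbf{s}\pmod{(M/\gcd(M,\ell))\mathbb{Z}^g}$.

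The only point requiring care --- and it is mild --- is the gcd bookkeeping in the last display: one must check simultaneously that the chosen shift lattice is integral (so that the translation formula may be invoked) and that the factor $M$ coming from $\mathbf{b}$ exactly cancels the $1/M$ in $\mathbf{r}$ while leaving the integral residue $\ell/\gcd(M,\ell)$. This is what pins down the stated modulus as the natural one, since a coarser shift of $\mathbf{s}$ would in general introduce a nontrivial root of unity $e(\ell\,{^t}\mathbf{r}\mathbf{b})\neq1$.
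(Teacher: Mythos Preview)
Your proof is correct and follows essentially the same approach as the paper: both apply the translation formula of Proposition~\ref{transl} and check that the root-of-unity prefactor $e(\ell\,{^t}\mathbf{r}\mathbf{b})$ vanishes for $\mathbf{b}\in(M/\gcd(M,\ell))\mathbb{Z}^g$. The only difference is cosmetic: the paper handles the shifts in $\mathbf{r}$ and $\mathbf{s}$ simultaneously in a single line, whereas you split into the two cases $\mathbf{b}=\mathbf{0}$ and $\mathbf{a}=\mathbf{0}$ and spell out the gcd arithmetic more explicitly.
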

\begin{proof}
If $\mathbf{a}\in\mathbb{Z}^g$ and $\mathbf{b}\in
(M/\gcd(M,\ell))\mathbb{Z}^g$, then we see by Proposition
\ref{transl} that
\begin{equation*}
\Phi_{\left[\begin{smallmatrix}\mathbf{r}+\mathbf{a}\\\mathbf{s}
+\mathbf{b}\end{smallmatrix}\right]}(Z)^\ell=(e({^t}\mathbf{r}\mathbf{b})
\Phi_{\left[\begin{smallmatrix}\mathbf{r}\\\mathbf{s}\end{smallmatrix}\right]}(Z))^\ell
=\Phi_{\left[\begin{smallmatrix}\mathbf{r}\\\mathbf{s}\end{smallmatrix}\right]}(Z)^\ell.
\end{equation*}
\end{proof}

Let
\begin{eqnarray*}
S_{N}&=&\bigg\{\left[\begin{matrix}A&B\\C&D\end{matrix}\right]\in\mathrm{Sp}_{2g}(\mathbb{Z}/N\mathbb{Z})~|~
\{{^t}AC\}\equiv\{{^t}BD\}\equiv\mathbf{0}\pmod{2}\bigg\},\\
G_{N}&=&\langle \iota((\mathbb{Z}/N\mathbb{Z})^\times), S_N\rangle,
\end{eqnarray*}
which are subgroups of $\mathrm{GSp}_{2g}(\mathbb{Z}/N\mathbb{Z})$
\cite[$\S$27.6]{Shimura}. One can then readily show that
\begin{equation*}
G_N=\bigg\{\left[\begin{matrix}A&B\\C&D\end{matrix}\right]\in\mathrm{GSp}_{2g}(\mathbb{Z}/N\mathbb{Z})~|~
\{{^t}AC\}\equiv\{{^t}BD\}\equiv\mathbf{0}\pmod{2}\bigg\}.
\end{equation*}

\begin{theorem}\label{family}
If $\alpha\in G_N$, then we have
\begin{equation*}
(\Phi_{\left[\begin{smallmatrix}\mathbf{r}\\\mathbf{s}\end{smallmatrix}\right]}(Z)^{2N^2}
)^\alpha=
\Phi_{{^t}\alpha\left[\begin{smallmatrix}\mathbf{r}\\\mathbf{s}\end{smallmatrix}\right]}(Z)^{2N^2}.
\end{equation*}
\end{theorem}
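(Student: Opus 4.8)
The plan is to exploit the decomposition underlying Proposition \ref{Groupaction}(iii). Write $v=\left[\begin{smallmatrix}\mathbf{r}\\\mathbf{s}\end{smallmatrix}\right]$. Since $\iota((\mathbb{Z}/N\mathbb{Z})^\times)\subseteq G_N$ (as ${^t}I_g\cdot 0$ and ${^t}0\cdot a^{-1}I_g$ have zero diagonals) and $G_N$ is a group, for $\alpha\in G_N$ with $a=\nu(\alpha)$ the element $\gamma=\iota(a)\alpha$ satisfies $\nu(\gamma)=\nu(\iota(a))\nu(\alpha)=a^{-1}a=1$, so by the explicit description of $G_N$ it lies in $G_N\cap\mathrm{Sp}_{2g}(\mathbb{Z}/N\mathbb{Z})=S_N$, and $\alpha=\iota(a^{-1})\gamma$. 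Because the action of $\mathrm{GSp}_{2g}(\mathbb{Z}/N\mathbb{Z})$ on $\mathcal{F}_N$ is a right action and ${^t}(\beta\delta)={^t}\delta\,{^t}\beta$, the asserted identity is multiplicative in $\alpha$: once it is known for $\iota(a^{-1})$ and for $\gamma$, applying these in turn (the second to the characteristic ${^t}\iota(a^{-1})v$) yields
\begin{equation*}
(\Phi_{v}(Z)^{2N^2})^{\alpha}=(\Phi_{{^t}\iota(a^{-1})v}(Z)^{2N^2})^{\gamma}
=\Phi_{{^t}\gamma\,{^t}\iota(a^{-1})v}(Z)^{2N^2}=\Phi_{{^t}\alpha v}(Z)^{2N^2}.
\end{equation*}
So it suffices to treat the two building blocks $\gamma\in S_N$ and $\iota(a)$ separately, for every characteristic. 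Throughout I use that $\Phi_{v}(Z)^{2N^2}\in\mathcal{F}_N$ (Theorem \ref{modularity}) and that, by Proposition \ref{transl}, it depends only on $\mathbf{r},\mathbf{s}\pmod{\mathbb{Z}^g}$, since $e({^t}\mathbf{r}\mathbf{b})^{2N^2}=e(2N^2\,{^t}\mathbf{r}\mathbf{b})=1$ for every $\mathbf{b}\in\mathbb{Z}^g$.

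For $\gamma\in S_N$, lift it to $\gamma_0=\left[\begin{smallmatrix}A_0&B_0\\C_0&D_0\end{smallmatrix}\right]\in\mathrm{Sp}_{2g}(\mathbb{Z})$ with $\gamma_0\equiv\gamma\pmod N$, so that $(\Phi_{v}(Z)^{2N^2})^{\gamma}=\Phi_{v}(\gamma_0(Z))^{2N^2}$ by Proposition \ref{Groupaction}(ii). The conditions $\{{^t}A_0C_0\},\{{^t}B_0D_0\}\in2\mathbb{Z}^g$ needed to invoke Proposition \ref{transf} hold automatically: the congruences defining $S_N$ are taken modulo $2$, and as $N$ is even they are inherited by any integral lift of $\gamma$. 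Applying Proposition \ref{transf} with $\mathbf{u}=\mathbf{0}$ to both $\Theta(\mathbf{0},\gamma_0(Z);\mathbf{r},\mathbf{s})$ and $\Theta(\mathbf{0},\gamma_0(Z);\mathbf{0},\mathbf{0})$ and forming their quotient, the common factor $\lambda_{\gamma_0}\det(C_0Z+D_0)^{1/2}$ cancels, leaving
\begin{equation*}
\Phi_{v}(\gamma_0(Z))=e(({^t}\mathbf{r}\mathbf{s}-{^t}\mathbf{r}'\mathbf{s}')/2)\,\Phi_{{^t}\gamma_0 v}(Z),\qquad
\left[\begin{matrix}\mathbf{r}'\\\mathbf{s}'\end{matrix}\right]={^t}\gamma_0 v.
\end{equation*}

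Raising to the power $2N^2$ is where the exponent in the statement is forced. Since $\mathbf{r},\mathbf{s}\in(1/N)\mathbb{Z}^g$ and $\mathbf{r}',\mathbf{s}'$ are integral combinations of them, both ${^t}\mathbf{r}\mathbf{s}$ and ${^t}\mathbf{r}'\mathbf{s}'$ lie in $(1/N^2)\mathbb{Z}$, so $e(({^t}\mathbf{r}\mathbf{s}-{^t}\mathbf{r}'\mathbf{s}')/2)$ is a $2N^2$-th root of unity and its $2N^2$-th power is $1$; note that the weaker exponent $2N$ would not suffice here. Hence $(\Phi_{v}(Z)^{2N^2})^{\gamma}=\Phi_{{^t}\gamma_0 v}(Z)^{2N^2}$, which equals $\Phi_{{^t}\gamma v}(Z)^{2N^2}$ because ${^t}\gamma_0\equiv{^t}\gamma\pmod N$ shifts the characteristic only by an element of $\mathbb{Z}^{2g}$ (Lemma \ref{determine}). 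This settles the $S_N$ case.

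For $\iota(a)$ with $a\in(\mathbb{Z}/N\mathbb{Z})^\times$, I compute at level $2N^2$, where Lemma \ref{coefficient} applies. Choosing a lift $\widetilde a\in(\mathbb{Z}/2N^2\mathbb{Z})^\times$ with $\widetilde a\equiv a\pmod N$, that lemma gives $\Phi_{v}^{\iota(\widetilde a)}=\Phi_{\left[\begin{smallmatrix}\mathbf{r}\\\widetilde a^{-1}\mathbf{s}\end{smallmatrix}\right]}$, hence
\begin{equation*}
(\Phi_{v}(Z)^{2N^2})^{\iota(\widetilde a)}=\Phi_{\left[\begin{smallmatrix}\mathbf{r}\\\widetilde a^{-1}\mathbf{s}\end{smallmatrix}\right]}(Z)^{2N^2}
=\Phi_{\left[\begin{smallmatrix}\mathbf{r}\\a^{-1}\mathbf{s}\end{smallmatrix}\right]}(Z)^{2N^2}=\Phi_{{^t}\iota(a)v}(Z)^{2N^2},
\end{equation*}
using $\widetilde a^{-1}\equiv a^{-1}\pmod N$ together with ${^t}\iota(a)=\iota(a)$. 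The remaining, and most delicate, point is to see that this level-$2N^2$ computation really computes the level-$N$ action of $\iota(a)$ on $\Phi_{v}(Z)^{2N^2}\in\mathcal{F}_N$. By Proposition \ref{Groupaction}(i) both actions amount to applying to the Fourier coefficients the automorphism of the coefficient field attached to $a$, respectively $\widetilde a$; since those coefficients already lie in $\mathbb{Q}(\zeta_N)$ and $\widetilde a\equiv a\pmod N$, the level-$2N^2$ automorphism restricts on $\mathbb{Q}(\zeta_N)$ to the level-$N$ one, so the two actions agree on $\Phi_{v}(Z)^{2N^2}$. This cross-level compatibility, rather than any single computation, is the main obstacle of the proof.
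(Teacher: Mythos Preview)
Your proof is correct and follows essentially the same approach as the paper: handle $\iota((\mathbb{Z}/N\mathbb{Z})^\times)$ via Lemma~\ref{coefficient} and Lemma~\ref{determine}, handle $S_N$ by lifting to $\mathrm{Sp}_{2g}(\mathbb{Z})$ and applying Proposition~\ref{transf}, then combine the two since $G_N=\langle\iota((\mathbb{Z}/N\mathbb{Z})^\times),S_N\rangle$. Your explicit discussion of the cross-level compatibility for the $\iota$-action is a useful clarification of a point the paper leaves implicit.
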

\begin{proof}
If $\alpha\in\iota((\mathbb{Z}/N\mathbb{Z})^\times)$, then the
assertion follows from Lemmas \ref{coefficient} and \ref{determine}.
\par
Let $\alpha\in S_N$ with a lifting $\alpha_0=\left[\begin{matrix}A_0
& B_0\\C_0&D_0\end{matrix}\right]$ to
$\mathrm{Sp}_{2g}(\mathbb{Z})$. Note that since $N$ is even,
$\{{^t}A_0C_0\},\{{^t}B_0D_0\}\in2\mathbb{Z}^g$.
 Thus we derive that
\begin{eqnarray*}
&&(\Phi_{\left[\begin{smallmatrix}\mathbf{r}\\\mathbf{s}\end{smallmatrix}\right]}(Z)^{2N^2}
)^\alpha\\&=&(\Phi_{\left[\begin{smallmatrix}\mathbf{r}\\\mathbf{s}\end{smallmatrix}\right]}(Z)^{2N^2}
)^{\alpha_0}\quad \textrm{by Proposition
\ref{Groupaction}(iii)}\\
&=&
\Phi_{\left[\begin{smallmatrix}\mathbf{r}\\\mathbf{s}\end{smallmatrix}\right]}(Z)^{2N^2}\circ
\alpha_0\quad\textrm{by Proposition \ref{Groupaction}(ii)}\\
&=&(e(({^t}\mathbf{r}\mathbf{s}-{^t}\mathbf{r}'\mathbf{s}')/2)\Phi_{{^t}\alpha_0\left[\begin{smallmatrix}\mathbf{r}\\\mathbf{s}\end{smallmatrix}\right]}(Z))^{2N^2}
\quad\textrm{where $\left[\begin{matrix}
\mathbf{r}'\\
\mathbf{s}'
\end{matrix}\right]
={^t}\alpha_0 \left[\begin{matrix}
\mathbf{r}\\
\mathbf{s}
\end{matrix}\right]$, by Proposition \ref{transf}}
\\
&=&\Phi_{{^t}\alpha\left[\begin{smallmatrix}\mathbf{r}\\\mathbf{s}\end{smallmatrix}\right]}(Z)^{2N^2}
\quad\textrm{by Lemma \ref{determine}}.
\end{eqnarray*}
Since $G_N=\langle
\iota((\mathbb{Z}/N\mathbb{Z})^\times),S_N\rangle$, we get the
theorem.
\end{proof}

\begin{remark}\label{oddtransf}
\begin{itemize}
\item[(i)] This theorem tells us that the group $G_N$ acts on the family

$\{\Phi_{\left[\begin{smallmatrix}\mathbf{r}\\\mathbf{s}\end{smallmatrix}\right]}(Z)^{2N^2}\}
_{\mathbf{r},\mathbf{s}\in(1/N)\mathbb{Z}^g/\mathbb{Z}^g}$ in a
natural way.
\item[(ii)]
For later use we consider the case where
$\mathbf{r},\mathbf{s}\in(1/M)\mathbb{Z}^g$ for an odd positive
integer $M$. Then
$\Phi_{\left[\begin{smallmatrix}\mathbf{r}\\\mathbf{s}\end{smallmatrix}\right]}(Z)$
(respectively,
$\Phi_{\left[\begin{smallmatrix}\mathbf{r}\\\mathbf{s}\end{smallmatrix}\right]}(Z)^M$)
is of level $2M^2$ (respectively, $2M$) by Theorem \ref{modularity}.
As in the proof of Theorem \ref{family} one can show in a similar
way that if $\alpha\in G_{2M}$, then
\begin{equation*}
(\Phi_{\left[\begin{smallmatrix}\mathbf{r}\\\mathbf{s}\end{smallmatrix}\right]}(Z)^{2M^2}
)^\alpha=
\Phi_{{^t}\alpha\left[\begin{smallmatrix}\mathbf{r}\\\mathbf{s}\end{smallmatrix}\right]}(Z)^{2M^2}.
\end{equation*}
Now, let $\alpha\in G_{2M^2}$ with $a=\nu(\alpha)$ and set
$\gamma=\iota(a)\alpha\in S_{2M^2}$. Then we achieve that
\begin{eqnarray*}
\Phi_{\left[\begin{smallmatrix}\mathbf{r}\\\mathbf{s}\end{smallmatrix}\right]}(Z)^\alpha&=&
\Phi_{\left[\begin{smallmatrix}\mathbf{r}\\\mathbf{s}\end{smallmatrix}\right]}(Z)^{\iota(a)^{-1}\gamma}\\
&=&\Phi_{\left[\begin{smallmatrix}\mathbf{r}\\a\mathbf{s}\end{smallmatrix}\right]}(Z)^\gamma\quad
\textrm{by Lemmas \ref{coefficient} and \ref{determine}}\\
&=&e(({^t}\mathbf{r}a\mathbf{s}-{^t}\mathbf{r}'\mathbf{s}')/2)
\Phi_{\left[\begin{smallmatrix}\mathbf{r}'\\\mathbf{s}'\end{smallmatrix}\right]}(Z)
\quad\textrm{where}~\left[\begin{matrix}
\mathbf{r}'\\
\mathbf{s}'
\end{matrix}\right]
={^t}\gamma \left[\begin{matrix}
\mathbf{r}\\
a\mathbf{s}
\end{matrix}\right]=
{^t}\gamma{^t}\iota(a^{-1}) \left[\begin{matrix}
\mathbf{r}\\
\mathbf{s}
\end{matrix}\right]={^t}\alpha\left[\begin{matrix}
\mathbf{r}\\
\mathbf{s}
\end{matrix}\right]
, \\
&&\textrm{by Proposition \ref{transf} and Lemma \ref{determine}}\\
&=&e(({^t}\mathbf{r}a\mathbf{s}-{^t}\mathbf{r}'\mathbf{s}')/2)
\Phi_{{^t}\alpha\left[\begin{smallmatrix}\mathbf{r}\\\mathbf{s}\end{smallmatrix}\right]}(Z).
\end{eqnarray*}
\end{itemize}
\end{remark}

\begin{lemma}\label{conjugation}
Let $Z_0\in\mathfrak{H}_g$. If
$\Theta(\mathbf{0},Z_0;\mathbf{0},\mathbf{0})$ is nonzero, then we
have
\begin{equation*}
\overline{\Phi_{\left[\begin{smallmatrix}\mathbf{r}\\\mathbf{s}\end{smallmatrix}\right]}(Z_0)}
=
\Phi_{\left[\begin{smallmatrix}\mathbf{r}\\-\mathbf{s}\end{smallmatrix}\right]}(-\overline{Z}_0),
\end{equation*}
where the bar indicates the usual complex conjugation.
\end{lemma}
\begin{proof}
Consider the expansion
$\Phi_{\left[\begin{smallmatrix}\mathbf{r}\\\mathbf{s}\end{smallmatrix}\right]}(Z)
=\sum_{\xi}c(\xi)e(\mathrm{tr}(\xi
Z)/2N^2)/\Theta(\mathbf{0},Z;\mathbf{0},\mathbf{0})$ with $c(\xi)\in
\mathbb{Q}(\zeta_{2N^2})$, where $\xi$ runs over all $g\times g$
positive semi-definite symmetric matrices over half integers with
integral diagonal entries. Note that
$\Theta(\mathbf{0},Z;\mathbf{0},\mathbf{0})$ has rational Fourier
coefficients. Then we obtain that
\begin{eqnarray*}
\overline{\Phi_{\left[\begin{smallmatrix}\mathbf{r}\\\mathbf{s}\end{smallmatrix}\right]}(Z_0)}
&=&\sum_{\xi}\overline{c(\xi)}e(\mathrm{tr}(\xi(-\overline{Z}_0))/2N^2)/\Theta(\mathbf{0},-\overline{Z}_0;\mathbf{0},\mathbf{0})
\quad\textrm{because}~\overline{e(z)}=e(-\overline{z})~(z\in\mathbb{C})\\
&=&\sum_{\xi}c(\xi)^{\rho(-1)}e(\mathrm{tr}(\xi(-\overline{Z}_0))/2N^2)/\Theta(\mathbf{0},-\overline{Z}_0;\mathbf{0},\mathbf{0})\\
&&\textrm{where $\rho(-1)$ is the endomorphism of
$\mathbb{Q}(\zeta_{2N^2})$ induced from
$\zeta_{2N^2}\mapsto\zeta_{2N^2}^{-1}=\overline{\zeta}_{2N^2}$}\\
&=&(\sum_{\xi}c(\xi)e(\mathrm{tr}(\xi
Z)/2N^2)/\Theta(\mathbf{0},Z;\mathbf{0},\mathbf{0}))^{\iota(-1)^{-1}}(-\overline{Z}_0)\quad\textrm{by
Proposition \ref{Groupaction}(i)}\\
&=&\Phi_{\left[\begin{smallmatrix}\mathbf{r}\\\mathbf{s}\end{smallmatrix}\right]}(Z)^{\iota(-1)^{-1}}(-\overline{Z}_0)\\
&=&\Phi_{\left[\begin{smallmatrix}\mathbf{r}\\-\mathbf{s}\end{smallmatrix}\right]}(-\overline{Z}_0)
\quad\textrm{by
Lemma \ref{coefficient}}.
\end{eqnarray*}
\end{proof}

\section {Shimura's reciprocity law}\label{Srl}

We shall briefly explain Shimura's reciprocity law which connects
the theory of Siegel modular functions with class field theory.
\par
Let $g$ be a positive integer and $K$ be a CM-field with
$[K:\mathbb{Q}]=2g$, that is, a totally imaginary quadratic
extension of a totally real algebraic number field. Fix a set
$\{\varphi_1,\ldots,\varphi_g\}$ of $g$ embeddings
$K\rightarrow\mathbb{C}$ such that no two of them are complex
conjugate. One can take an element $\xi\in K$ satisfying
\begin{itemize}
\item[(i)] $\xi$ is purely imaginary,
\item[(ii)] $-\xi^2$ is totally positive,
\item[(iii)] $\mathrm{Im}(\xi^{\varphi_k})>0$ for all
$k=1,\ldots,g$,
\item[(iv)] $\mathrm{Tr}_{K/\mathbb{Q}}(\xi x)\in\mathbb{Z}$ for
all $x\in\mathcal{O}_K$
\end{itemize}
(\cite[p.43]{Shimura}). Define a map $\Phi:K\rightarrow\mathbb{C}^g$
by
$\Phi(x)=\left[\begin{matrix} x^{\varphi_1}\\
\vdots\\
x^{\varphi_g}
\end{matrix}\right]$, and
let $L=\{\Phi(x)\in\mathbb{C}^g~|~x\in\mathcal{O}_K\}$ which forms a
lattice in $\mathbb{C}^g$. Then we have an $\mathbb{R}$-bilinear map
$E:\mathbb{C}^g\times\mathbb{C}^g\rightarrow\mathbb{R}$ defined by
\begin{equation*}
E(\mathbf{u},\mathbf{v})=\sum_{k=1}^g
\xi^{\varphi_k}(u_k\overline{v}_k-\overline{u}_k{v}_k)\quad(\mathbf{u}=\left[\begin{matrix}u_1\\
\vdots\\u_g\end{matrix}\right],~\mathbf{v}=\left[\begin{matrix}v_1\\
\vdots\\v_g\end{matrix}\right]).
\end{equation*}
And $E$ gives a non-degenerate Riemann form on the complex torus
$\mathbb{C}^g/L$ satisfying
$E(\Phi(x),\Phi(y))=\mathrm{Tr}_{K/\mathbb{Q}}(\xi
x\overline{y})\in\mathbb{Z}$ for all $x,y\in\mathcal{O}_K$
\cite[p.44]{Shimura}, which makes it an abelian variety. Hence one
can find a positive integer $\mu$, a diagonal matrix
$\mathcal{E}=\left[\begin{matrix}\varepsilon_1 &
&\\&\ddots&\\&&\varepsilon_g\end{matrix}\right]\in\mathrm{Mat}_g(\mathbb{Z})$
with $\varepsilon_1=1$ and $\varepsilon_k|\varepsilon_{k+1}$
($k=1,\ldots,g-1$), and a complex $g\times 2g$ matrix
$\Omega=\left[\begin{matrix}\omega_1 & \omega_2\end{matrix}\right]$
with $\omega_1,\omega_2\in\mathrm{Mat}_g(\mathbb{C})$ such that
\begin{itemize}
\item[(i)]
$E(\Omega\mathbf{x},\Omega\mathbf{y})=\mu{^t}\mathbf{x}J\mathbf{y}$
 for all $\mathbf{x},\mathbf{y}\in\mathbb{R}^{2g}$,
\item[(ii)]
$L=\bigg\{\Omega\left[\begin{matrix}\mathbf{u}\\\mathbf{v}\end{matrix}\right]~|~\mathbf{u}\in\mathbb{Z}^g,~\mathbf{v}\in\mathcal{E}\mathbb{Z}^g\bigg\}$
\end{itemize}
\cite[p. 675]{Shimura3}. It is well-known that
$Z_0=\omega_2^{-1}\omega_1$ lies in $\mathfrak{H}_g$. Let $K^*$ be
the reflex field of $K$, namely $K^*=\mathbb{Q}(\sum_{k=1}^g
x^{\varphi_k}|x\in K)$. As a consequence of the main theorem of
complex multiplication we have the following proposition.

\begin{proposition}\label{CM}
If $f$ is an element of $\mathcal{F}_N$ which is finite at $Z_0$,
then the special value $f(Z_0)$ belongs to the maximal abelian
extension of $K^*$.
\end{proposition}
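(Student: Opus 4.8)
The plan is to recognize $Z_0$ as the period point of a polarized abelian variety with complex multiplication, and then to invoke Shimura's main theorem of complex multiplication. First I would observe that the lattice $L=\Phi(\mathcal{O}_K)$ together with the Riemann form $E$ makes $A_0=\mathbb{C}^g/L$ a polarized abelian variety, and that multiplication by an element $a\in\mathcal{O}_K$, realized by the diagonal matrix $\mathrm{diag}(a^{\varphi_1},\ldots,a^{\varphi_g})$, stabilizes $L$ and hence yields an embedding $K\hookrightarrow\mathrm{End}(A_0)\otimes\mathbb{Q}$. Thus $(A_0,E,\Phi)$ is a polarized abelian variety with complex multiplication of CM-type $(K,\Phi)$, whose polarization type is recorded by the elementary divisor matrix $\mathcal{E}$; the data $\Omega=[\omega_1\ \omega_2]$ with $Z_0=\omega_2^{-1}\omega_1$ exhibit $Z_0\in\mathfrak{H}_g$ as the standard period point attached to this datum.

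Next I would interpret $f(Z_0)$ arithmetically. By definition $f\in\mathcal{F}_N$ is a quotient of Siegel modular forms of equal weight and level $N$ whose Fourier coefficients lie in $\mathbb{Q}(\zeta_N)\subset\mathbb{Q}^{\mathrm{ab}}$; hence $f$ is a modular function rational over $\mathbb{Q}(\zeta_N)$, and when it is finite at $Z_0$ its value is an arithmetic invariant of the triple consisting of $A_0$, its polarization, and its level-$N$ structure. In Shimura's formulation these functions generate the field of moduli of such polarized abelian varieties with level structure, so $f(Z_0)$ is in particular an algebraic number.

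The core step is the main theorem of complex multiplication for abelian varieties of dimension $g$. Passing to the reflex $(K^*,\Phi^*)$, Shimura's theorem asserts that $\mathrm{Gal}(\overline{K^*}/K^*)$ acts on such CM-values through its maximal abelian quotient $\mathrm{Gal}(K^{*\mathrm{ab}}/K^*)$, the action being described by the natural action of the general symplectic group on $\mathcal{F}_N$ (compare Proposition \ref{Groupaction}). Concretely, to each $\sigma\in\mathrm{Gal}(\overline{K^*}/K^*)$ one attaches, via the reflex norm and the idelic reciprocity map, an element of $\mathrm{GSp}_{2g}(\widehat{\mathbb{Z}})$ whose action on $f$ computes $f(Z_0)^\sigma$; since this prescription factors through an abelian quotient, every $\sigma$ in the commutator subgroup of $\mathrm{Gal}(\overline{K^*}/K^*)$ fixes $f(Z_0)$. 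Therefore $f(Z_0)\in K^{*\mathrm{ab}}$, as claimed; for the full statement I would cite \cite{Shimura}.

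The main obstacle is precisely this last step: it is not a formal manipulation but a genuine appeal to the deep main theorem of complex multiplication, which simultaneously furnishes the algebraicity of $f(Z_0)$ and the abelian nature of the Galois action. The remaining work, which I expect to be routine but necessary, is to verify that our normalization satisfies the hypotheses of Shimura's theorem—namely that $E$, $\mathcal{E}$ and $\Phi$ constitute an admissible polarized CM datum and that the rationality field $\mathbb{Q}(\zeta_N)$ of the Fourier coefficients matches the cyclotomic field entering the reciprocity law—so that the cited theorem applies to $f(Z_0)$ verbatim.
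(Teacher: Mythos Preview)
Your proposal is correct and takes essentially the same approach as the paper: both reduce the statement to Shimura's main theorem of complex multiplication, with the paper simply citing \cite[Theorem 26.6]{Shimura} directly. Your write-up merely unpacks the geometric content behind that citation (the CM structure on $\mathbb{C}^g/L$ and the arithmetic interpretation of $f(Z_0)$), so there is no substantive difference in strategy.
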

\begin{proof}
See \cite[Theorem 26.6]{Shimura}.
\end{proof}

For simplicity and later use, we assume that $\mathcal{E}=I_{g}$. We
define, as a representation map, a ring homomorphism
$h:K\rightarrow\mathrm{Mat}_{2g}(\mathbb{Q})$ as follows: Fix
$\xi_1,\xi_2,\ldots,\xi_{2g}\in K$ such that
$\Omega=\left[\begin{matrix}
\Phi(\xi_1)&\Phi(\xi_2)&\cdots&\Phi(\xi_{2g})\end{matrix}\right]$
and let $x\in K$. If $x\xi_j=\sum_{k=1}^{2g}r_{jk}\xi_k$ with
$r_{jk}\in\mathbb{Q}$ ($j=1,\ldots,g$), then we define
\begin{equation*}
h(x)=\left[\begin{matrix}r_{jk}\end{matrix}\right]_{1\leq j,k\leq
2g}.
\end{equation*}
Since
$L=\Phi(\mathcal{O}_K)=\Omega\mathbb{Z}^{2g}=\mathbb{Z}\Phi(\xi_1)+\cdots+
\mathbb{Z}\Phi(\xi_{2g})$ by the assumption $\mathcal{E}=I_g$, the
set $\{\xi_1,\ldots,\xi_{2g}\}$ forms a basis of $\mathcal{O}_K$ as
 $\mathbb{Z}$-module. Hence, if $x\in\mathcal{O}_K$, then
$h(x)\in\mathrm{Mat}_{2g}(\mathbb{Z})$.
\par
On the other hand, we take a Galois extension $K'$ of $\mathbb{Q}$
containing $K$, and extend $\varphi_k$ ($k=1,\ldots,g$) to an
element of $\mathrm{Gal}(K'/\mathbb{Q})$. We then define a
homomorphism $\varphi^*:(K^*)^\times\rightarrow K^\times$ by
\begin{equation*}
\varphi^*(x)=\prod_{k=1}^g
x^{\varphi_k^{-1}}\quad(x\in(K^*)^\times).
\end{equation*}

\begin{proposition}[Shimura's reciprocity law]\label{reciprocity}
Let $f$ be as in \textup{Proposition \ref{CM}}. Take a positive
integer $M$ such that $N|M$ and $f(Z_0)\in K^*_{(M)}$. Let $x$ be an
element of $\mathcal{O}_{K^*}$ which is prime to $M$. Considering
$h(\varphi^*(x))$ as an element of
$\mathrm{GSp}_{2g}(\mathbb{Z}/M\mathbb{Z})$ we have
\begin{equation*}
f(Z_0)^{(\frac{K^*_{(M)}/K^*}{(x)})}=f^{h(\varphi^*(x))}(Z_0),
\end{equation*}
where $(\frac{K^*_{(M)}/K^*}{\cdot})$ is the Artin reciprocity map.
\end{proposition}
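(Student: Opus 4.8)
The plan is to obtain this statement as a special case of the main theorem of complex multiplication for polarized abelian varieties, of which Proposition \ref{CM} is the qualitative shadow. First I would give $f(Z_0)$ a moduli-theoretic meaning. The CM-point $Z_0=\omega_2^{-1}\omega_1$ is attached to the polarized abelian variety $A_0=\mathbb{C}^g/L$ carrying the Riemann form $E$ together with the level-$M$ structure furnished by the chosen basis $\xi_1,\dots,\xi_{2g}$, that is, the identification of the $M$-torsion $\tfrac{1}{M}L/L$ with $(\mathbb{Z}/M\mathbb{Z})^{2g}$ induced by $\Omega$. Since $N\mid M$ we have $f\in\mathcal{F}_N\subseteq\mathcal{F}_M$, and a function in $\mathcal{F}_M$ is exactly a rule assigning a value to such a triple; thus $f(Z_0)$ is a modular invariant of $(A_0,E,\text{level structure})$ lying, by Proposition \ref{CM}, in the maximal abelian extension of $K^*$.

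Second, I would feed the Artin symbol into the main theorem of complex multiplication. Set $\sigma=\bigl(\tfrac{K^*_{(M)}/K^*}{(x)}\bigr)$. The main theorem produces an isomorphism of polarized abelian varieties $A_0^\sigma\cong A'$, where $A'$ arises from $A_0$ by transporting the CM-structure through the reflex norm, and, crucially, it pins down the induced map on $M$-torsion. The point is that the reflex norm $\varphi^*(x)=\prod_{k=1}^g x^{\varphi_k^{-1}}\in K^\times$ is precisely the quantity that converts the Galois action of $\sigma$ on $K^*_{(M)}$ into multiplication by $\varphi^*(x)$ on the torsion of $A_0$. Because $x$ is prime to $M$, so is $\varphi^*(x)$, and hence the latter acts on $\tfrac{1}{M}L/L$ through an automorphism.

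Third, I would translate this torsion-theoretic statement into the symplectic-group language of Proposition \ref{Groupaction}. Acting on $L=\Omega\mathbb{Z}^{2g}$ by $\varphi^*(x)$ is recorded, via the regular representation $h$, by the matrix $h(\varphi^*(x))$, whose reduction modulo $M$ lies in $\mathrm{GSp}_{2g}(\mathbb{Z}/M\mathbb{Z})$; its multiplier is $\nu(h(\varphi^*(x)))=N_{K^*/\mathbb{Q}}(x)$ because $\varphi^*(x)\,\overline{\varphi^*(x)}$ equals the full norm and $E$ scales accordingly. Matching the change of level structure against the defining action of $\mathrm{GSp}_{2g}(\mathbb{Z}/M\mathbb{Z})$ on $\mathcal{F}_M$ then yields $f(Z_0)^\sigma=f^{h(\varphi^*(x))}(Z_0)$, as desired.

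The hard part will be the input itself: the main theorem of complex multiplication is deep, so rather than reprove it I would quote Shimura's formulation and verify that our normalizations ($\mathcal{E}=I_g$, the choice of $\xi$, the representation $h$) agree with his. The technical crux is the compatibility on $M$-torsion, namely checking that $\sigma|_{K^*_{(M)}}$ really corresponds to multiplication by $\varphi^*(x)$, together with the passage between the idele-theoretic statement of the main theorem and the ideal-theoretic Artin symbol $\bigl(\tfrac{K^*_{(M)}/K^*}{(x)}\bigr)$; here one uses that $x$ is prime to $M$ to select an idele supported away from the places dividing $M$. One must also confirm that the multiplier $\nu(h(\varphi^*(x)))=N_{K^*/\mathbb{Q}}(x)$ matches the cyclotomic action of $\sigma$ on $\mathbb{Q}(\zeta_M)\subseteq K^*_{(M)}$, which is exactly what makes Proposition \ref{Groupaction}(iii) applicable without a discrepancy in the coefficient field.
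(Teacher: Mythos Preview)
Your outline is sound and is essentially the standard derivation of this reciprocity law from the main theorem of complex multiplication for polarized abelian varieties, exactly as carried out in Shimura's references. The paper itself does not give an argument at all: its ``proof'' is simply the citation \cite[Theorem 26.8]{Shimura} or \cite[$\S$2.7]{Shimura1}. So you have supplied considerably more than the paper does, and what you supply matches the approach in those cited sources---interpret $f(Z_0)$ as a modular invariant of the CM polarized abelian variety with level structure, apply the main theorem to compute the effect of the Artin symbol via the reflex norm, and translate the resulting action on torsion into the $\mathrm{GSp}_{2g}(\mathbb{Z}/M\mathbb{Z})$-action on $\mathcal{F}_M$ through the representation $h$.
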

\begin{proof}
See \cite[Theorem 26.8]{Shimura} or \cite[$\S$2.7]{Shimura1}.
\end{proof}

\begin{remark}\label{liein}
Assume further $x\equiv1\pmod{N}$. It is obvious that
$\varphi^*(x)\equiv1\pmod{N}$, which yields $h(\varphi^*(x))\equiv
I_{2g}\pmod{N}$. Hence we get by Proposition \ref{reciprocity}
$f(Z_0)^{(\frac{K^*_{(M)}/K^*}{(x)})}=f^{h(\varphi^*(x))}(Z_0)=f(Z_0)$.
This means that $f(Z_0)$ lies in $K^*_{(N)}$.
\end{remark}

\section {Construction of class fields}\label{construction}

Let $K=\mathbb{Q}(\zeta)$ with $\zeta=\zeta_5$, which is a CM-field
of degree $2g=[K:\mathbb{Q}]=4$. In this section we shall examine
the subfield of $K_{(2p)}$ for an odd prime $p$ which is generated
by the special value of
$\Phi_{\left[\begin{smallmatrix}1/p\\0\\0\\0\end{smallmatrix}\right]}(Z)^{2p^2}$
by using Shimura's reciprocity law.
\par
Fix a set of two embeddings $\{\varphi_1,\varphi_2\}$, where
$\varphi_1$ and $\varphi_2$ are defined by
\begin{equation*}
\zeta^{\varphi_1}=\zeta\quad\textrm{and}\quad\zeta^{\varphi_2}=\zeta^2.
\end{equation*}
If we set $\xi=(\zeta-\zeta^4)/5$, then one can readily check that
$\xi$ satisfies the conditions (i)$\sim$(iv) in the beginning of
$\S$\ref{Srl}. Let $\Phi:K\rightarrow\mathbb{C}^2$ be the map given
by
$\Phi(x)=\left[\begin{matrix}x^{\varphi_1}\\x^{\varphi_2}\end{matrix}\right]$
and $L=\Phi(\mathcal{O}_K)$. We have an $\mathbb{R}$-bilinear map
$E:\mathbb{C}^2\times\mathbb{C}^2\rightarrow\mathbb{R}$ defined by
\begin{equation*}
E(\mathbf{u},\mathbf{v})=\xi^{\varphi_1}(u_1\overline{v}_1-\overline{u}_1{v}_1)
+\xi^{\varphi_2}(u_2\overline{v}_2-\overline{u}_2{v}_2)\quad(\mathbf{u}=\left[\begin{matrix}u_1\\u_2\end{matrix}\right],~
\mathbf{v}=\left[\begin{matrix}v_1\\v_2\end{matrix}\right]),
\end{equation*}
which induces a non-degenerate Riemann form on $\mathbb{C}^2/L$ by
$\S$\ref{Srl}. Set
\begin{equation*}
\xi_1=\zeta^2,~\xi_2=\zeta^4,~\xi_3=\zeta,~\xi_4=\zeta+\zeta^3,
\end{equation*}
and
\begin{equation*}
\Omega= \left[\begin{matrix}\Phi(\xi_1) & \Phi(\xi_2) & \Phi(\xi_3)
& \Phi(\xi_4)
\end{matrix}\right]=\left[\begin{matrix}\zeta^2 & \zeta^4 & \zeta & \zeta+\zeta^3\\
\zeta^4 & \zeta^3 & \zeta^2 & \zeta^2+\zeta\end{matrix}\right].
\end{equation*}
One can readily verify that
$\left[\begin{matrix}E(\Phi(\xi_j),\Phi(\xi_k))\end{matrix}\right]_{1\leq
j,k\leq 4}=J$, from which it follows that
$E(\Omega\mathbf{x},\Omega\mathbf{y})= {^t}\mathbf{x}J\mathbf{y}$
for all $\mathbf{x},\mathbf{y}\in\mathbb{R}^4$ because $E$ is
$\mathbb{R}$-bilinear. Furthermore, since
$\mathcal{O}_K=\mathbb{Z}[\zeta]=\xi_1\mathbb{Z}+\xi_2\mathbb{Z}+\xi_3\mathbb{Z}+\xi_4\mathbb{Z}$,
we get $L=\Phi(\mathcal{O}_K)=\Omega\mathbb{Z}^{4}$. (So, $\mu=1$
and $\mathcal{E}=I_2$ in $\S$\ref{Srl}.) Then we obtain a CM-point
\begin{equation*}
Z_0=\left[\begin{matrix}\Phi(\xi_3) &
\Phi(\xi_4)\end{matrix}\right]^{-1} \left[\begin{matrix}\Phi(\xi_1)
& \Phi(\xi_2)
\end{matrix}\right]=
\left[\begin{matrix}\zeta & \zeta+\zeta^3\\\zeta^2 &
\zeta^2+\zeta\end{matrix}\right]^{-1} \left[\begin{matrix}\zeta^2 &
\zeta^4\\\zeta^4 & \zeta^3
\end{matrix}\right]\quad(\in\mathfrak{H}_2).
\end{equation*}
Now we define a ring homomorphism
$h:K\rightarrow\mathrm{Mat}_4(\mathbb{Q})$ by the relation
\begin{equation*}
x\left[\begin{matrix}\xi_1\\\vdots\\\xi_4\end{matrix}\right]=h(x)\left[\begin{matrix}\xi_1\\\vdots\\\xi_4\end{matrix}\right]
\quad(x\in K).
\end{equation*}
Note that the reflex field $K^*$ of $K$ is the same as $K$. We
further define an endomorphism $\varphi^*$ of $K^\times$ by
\begin{equation*}
\varphi^*(x)=x^{\varphi_1^{-1}}x^{\varphi_2^{-1}}\quad(x\in
K^\times).
\end{equation*}
\par
Assume that $p$ is an odd prime.

\begin{lemma}\label{little}
Let $\mathbf{r},\mathbf{s}\in(1/p)\mathbb{Z}^2$.
\begin{itemize}
\item[\textup{(i)}] $\Theta(\mathbf{0},Z_0;\mathbf{0},\mathbf{0})$
is nonzero.
\item[\textup{(ii)}] $2\Phi_{\left[\begin{smallmatrix}\mathbf{r}\\\mathbf{s}\end{smallmatrix}\right]}(Z_0)$
is an algebraic integer.
\item[\textup{(iii)}] $\Phi_{\left[\begin{smallmatrix}\mathbf{r}\\\mathbf{s}\end{smallmatrix}\right]}(Z_0)
\in K_{(2p^2)}$ and
$\Phi_{\left[\begin{smallmatrix}\mathbf{r}\\\mathbf{s}\end{smallmatrix}\right]}(Z_0)^p
\in K_{(2p)}$.
\item[\textup{(iv)}] If $\mathbf{r}=\left[\begin{matrix}r_1\\r_2\end{matrix}\right]$ and
$\mathbf{s}=\pm\left[\begin{matrix}r_1-r_2\\-r_1\end{matrix}\right]$
for $r_1,r_2\in(1/p)\mathbb{Z}$, then
$e(-{^t}\mathbf{r}\mathbf{s}/2)\Phi_{\left[\begin{smallmatrix}\mathbf{r}\\\mathbf{s}\end{smallmatrix}\right]}(Z_0)$
is real.
\end{itemize}
\end{lemma}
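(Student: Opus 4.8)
The plan is to establish the four assertions in sequence, using the explicit CM-data for $K=\mathbb{Q}(\zeta_5)$ together with the transformation formulas from $\S3$ and the reciprocity law from $\S5$.

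For (i), I would invoke Proposition \ref{Igusazero}: the denominator $\Theta(\mathbf{0},Z_0;\mathbf{0},\mathbf{0})$ vanishes identically only when $\left[\begin{smallmatrix}\mathbf{0}\\\mathbf{0}\end{smallmatrix}\right]\in\Sigma_-$, which it is not since $e(2\,{}^t\mathbf{0}\,\mathbf{0})=1\neq-1$. But that only shows the theta function is not identically zero; to conclude nonvanishing \emph{at the specific point} $Z_0$ I would appeal to the theory of CM abelian varieties — the point $Z_0$ corresponds to an abelian variety with CM by $\mathcal{O}_K$, and the relevant even theta-null does not vanish there. Concretely I expect to argue that $\Theta(\mathbf{0},Z_0;\mathbf{0},\mathbf{0})$ is, up to a nonzero factor, a period-normalized value whose nonvanishing follows because $\left[\begin{smallmatrix}\mathbf{0}\\\mathbf{0}\end{smallmatrix}\right]$ is an even (not odd) characteristic; alternatively one can compute the $q$-expansion numerically at $Z_0$ to see it is nonzero. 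This is the step I expect to require the most care, since a generic nonvanishing statement does not immediately give nonvanishing at one point.

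For (ii), once (i) guarantees $\Phi_{\left[\begin{smallmatrix}\mathbf{r}\\\mathbf{s}\end{smallmatrix}\right]}(Z_0)$ is finite, I would combine two facts: the value lies in an abelian extension of $K$ by Proposition \ref{CM}, and the product of theta constants $\prod \Phi^{2N^2}$ has integral Fourier coefficients and satisfies an integrality (Kronecker-type) bound at CM points. The factor of $2$ in $2\Phi$ is the standard denominator appearing because half-integer characteristics introduce a $2$ in the normalization; I would show $2\Phi_{\left[\begin{smallmatrix}\mathbf{r}\\\mathbf{s}\end{smallmatrix}\right]}(Z_0)$ is integral over $\mathbb{Z}$ by exhibiting it as an algebraic integer via the theory of singular values of theta quotients, or by checking it is a root of a monic integral polynomial coming from the finitely many $\mathrm{GSp}$-conjugates under Theorem \ref{family}.

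For (iii), I would use the level statements established earlier: by Theorem \ref{modularity} (see Remark \ref{oddtransf}(ii)) $\Phi_{\left[\begin{smallmatrix}\mathbf{r}\\\mathbf{s}\end{smallmatrix}\right]}(Z)$ has level $2p^2$ and $\Phi^p$ has level $2p$. Then Proposition \ref{CM} places $\Phi(Z_0)\in K^*_{(2p^2)}=K_{(2p^2)}$ (using $K^*=K$), and $\Phi(Z_0)^p\in K_{(2p)}$; the refinement to the ray class field of the exact modulus follows from Remark \ref{liein}, since any $x\equiv1\pmod{2p^2}$ (resp. $\pmod{2p}$) acts trivially via $h(\varphi^*(x))\equiv I_4$. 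Finally for (iv) I would use Theorem \ref{conjugation}, which gives $\overline{\Phi_{\left[\begin{smallmatrix}\mathbf{r}\\\mathbf{s}\end{smallmatrix}\right]}(Z_0)}=\Phi_{\left[\begin{smallmatrix}\mathbf{r}\\-\mathbf{s}\end{smallmatrix}\right]}(-\overline{Z}_0)$, and match $-\overline{Z}_0$ back to $Z_0$ via a symplectic substitution coming from complex conjugation on $K$. The specific shape $\mathbf{s}=\pm\left[\begin{smallmatrix}r_1-r_2\\-r_1\end{smallmatrix}\right]$ should be exactly the condition making the characteristic invariant (up to the sign allowed by (\ref{minus})) under that substitution, so that the phase $e(-{}^t\mathbf{r}\mathbf{s}/2)$ precisely cancels the transformation factor and leaves the product real. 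The main obstacle throughout will be (i): pinning down nonvanishing at the one CM-point $Z_0$ rather than merely generically.
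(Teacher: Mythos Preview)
Your outline for (iii) and (iv) is essentially the paper's proof. For (iii) the paper invokes exactly Remark~\ref{oddtransf}(ii) and Remark~\ref{liein}. For (iv) the paper computes an explicit $\beta\in\mathrm{Sp}_4(\mathbb{Z})$ with $-\overline{Z}_0=\beta(Z_0)$ by writing $\overline{\Omega}=\Omega\alpha$ for a specific integer matrix $\alpha$, then combines Theorem~\ref{conjugation} with the transformation formula in Remark~\ref{oddtransf}(ii) and checks directly that for $\mathbf{s}=\pm\left[\begin{smallmatrix}r_1-r_2\\-r_1\end{smallmatrix}\right]$ the resulting characteristic agrees with the original (up to the sign handled by~(\ref{minus})), so the phase $e(-{}^t\mathbf{r}\mathbf{s}/2)$ is exactly what is needed. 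Your plan for (iv) is the same; you would just need to carry out the matrix computation to exhibit $\beta$ explicitly.

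Where your proposal diverges is in (i) and (ii). The paper does not prove these at all: it cites \cite[p.~784]{d-G} for the nonvanishing of $\Theta(\mathbf{0},Z_0;\mathbf{0},\mathbf{0})$ and \cite[Proposition~2]{Komatsu} for the algebraic integrality of $2\Phi_{\left[\begin{smallmatrix}\mathbf{r}\\\mathbf{s}\end{smallmatrix}\right]}(Z_0)$. You are right that Proposition~\ref{Igusazero} only rules out identical vanishing and says nothing about the single point $Z_0$; your proposed workarounds (appeal to CM theory, numerical check, or an unspecified ``period-normalized'' argument) are not proofs as stated, and the issue is genuinely nontrivial---de~Shalit and Goren handle it by a separate argument specific to this CM point. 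Likewise your sketch for (ii) (``exhibit it as a root of a monic polynomial coming from $\mathrm{GSp}$-conjugates'') does not by itself give integrality of the \emph{coefficients} of that polynomial; Komatsu's argument uses more. So the honest resolution is to cite these two references, as the paper does, rather than attempt ad hoc arguments.
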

\begin{proof}
(i) See \cite[p.784]{d-G}.\\
(ii) See \cite[Proposition 2]{Komatsu}.\\
(iii) This is immediate from Remarks \ref{oddtransf}(ii) and \ref{liein}.\\
(iv) We observe that
\begin{equation}\label{bar}
\overline{\Omega}=\left[\begin{matrix}\zeta^3 & \zeta & \zeta^4 &
\zeta^4+\zeta^2\\
\zeta & \zeta^2 & \zeta^3 & \zeta^3+\zeta^4\end{matrix}\right]
=\Omega\alpha\quad\textrm{where}~\alpha=\left[\begin{matrix}0 & 0 &
0 & 1\\
0 & 0 & 1 & 1\\
-1 & 1 & 0 & 0\\
1 & 0 & 0 & 0\end{matrix}\right].
\end{equation}
Let $\Omega=\left[\begin{matrix}\omega_1 &
\omega_2\end{matrix}\right]$ and $\alpha=\left[\begin{matrix}A &
B\\C & D\end{matrix}\right]$ with $2\times 2$ matrices
$\omega_1,\omega_2,A,B,C,D$. Then we derive that
\begin{eqnarray}
-\overline{Z}_0&=&-{^t}\overline{Z}_0\quad\textrm{since $Z_0={^t}Z_0$}\nonumber\\
&=&-{^t}\overline{\omega}_1{^t}\overline{\omega}_2^{-1}\nonumber\\
&=&-({^t}A{^t}\omega_1+{^t}C{^t}\omega_2)({^t}B{^t}\omega_1+{^t}D{^t}\omega_2)^{-1}\quad\textrm{by (\ref{bar})}\nonumber\\
&=&-({^t}A{^t}\omega_1+{^t}C{^t}\omega_2){^t}\omega_2^{-1}
{^t}\omega_2({^t}B{^t}\omega_1+{^t}D{^t}\omega_2)^{-1}\nonumber\\
&=&-({^t}A{^t}\omega_1{^t}\omega_2^{-1}+{^t}C)
({^t}B{^t}\omega_1{^t}\omega_2^{-1}+{^t}D)^{-1}\nonumber\\
&=&-({^t}AZ_0+{^t}C) ({^t}B{^t}Z_0+{^t}D)^{-1}\quad\textrm{because
$Z_0={^t}Z_0$}\nonumber\\
&=&\beta(Z_0)\quad\textrm{where}~\beta=
\left[\begin{matrix}-{^t}A & -{^t}C\\
{^t}B & {^t}D\end{matrix}\right]=\left[\begin{matrix}0 & 0 & 1 &
-1\\
0 & 0 & -1 & 0\\0 & 1 & 0 & 0\\
1 & 1 & 0 & 0\end{matrix}\right].\label{beta}
\end{eqnarray}
One can readily check that $\beta\in\mathrm{Sp}_4(\mathbb{Z})$.
Hence we achieve that
\begin{eqnarray*}
\overline{e(-{^t}\mathbf{r}\mathbf{s}/2)\Phi_{\left[\begin{smallmatrix}\mathbf{r}\\\mathbf{s}\end{smallmatrix}\right]}(Z_0)}&=&
e({^t}\mathbf{r}\mathbf{s}/2)\Phi_{\left[\begin{smallmatrix}\mathbf{r}\\-\mathbf{s}\end{smallmatrix}\right]}(-\overline{Z}_0)\quad
\textrm{by Lemma \ref{conjugation}}\\
&=&e({^t}\mathbf{r}\mathbf{s}/2)\Phi_{\left[\begin{smallmatrix}\mathbf{r}\\-\mathbf{s}\end{smallmatrix}\right]}(\beta(Z_0))\quad\textrm{by
(\ref{beta})}\\
&=&e({^t}\mathbf{r}\mathbf{s}/2)e(({^t}\mathbf{r}(-\mathbf{s})-{^t}\mathbf{r}'\mathbf{s}')/2)
\Phi_{{^t}\beta\left[\begin{smallmatrix}\mathbf{r}\\-\mathbf{s}\end{smallmatrix}\right]}(Z_0)\quad
\textrm{where}~\left[\begin{matrix}\mathbf{r}'\\\mathbf{s}'\end{matrix}\right]=
{^t}\beta\left[\begin{matrix}\mathbf{r}\\-\mathbf{s}\end{matrix}\right]
,\\&&\textrm{by
Remark \ref{oddtransf}(ii)}\\
&=&\left\{\begin{array}{ll}e((-r_1^2+2r_1r_2)/2)\Phi_{\left[\begin{smallmatrix}r_1\\r_2\\r_1-r_2\\-r_1\end{smallmatrix}\right]}(Z_0)
& \textrm{if}~\mathbf{s}=\left[\begin{matrix}r_1-r_2\\-r_1\end{matrix}\right],\\
e((r_1^2-2r_1r_2)/2)\Phi_{\left[\begin{smallmatrix}-r_1\\-r_2\\r_1-r_2\\-r_1\end{smallmatrix}\right]}(Z_0)
&
\textrm{if}~\mathbf{s}=\left[\begin{matrix}-r_1+r_2\\r_1\end{matrix}\right],
\end{array}\right.\\
&=&
e(-{^t}\mathbf{r}\mathbf{s}/2)\Phi_{\left[\begin{smallmatrix}\mathbf{r}\\\mathbf{s}\end{smallmatrix}\right]}(Z_0)\quad\textrm{by
(\ref{minus})},
\end{eqnarray*}
which ensures that
$e(-{^t}\mathbf{r}\mathbf{s}/2)\Phi_{\left[\begin{smallmatrix}\mathbf{r}\\\mathbf{s}\end{smallmatrix}\right]}(Z_0)$
is real.
\end{proof}

Now, we shall investigate the field $K(z^{2p^2})$ with
\begin{equation*}
z=\Phi_{\left[\begin{smallmatrix}1/p\\0\\0\\0\end{smallmatrix}\right]}(Z_0).
\end{equation*} Note that $z\in K_{(2p^2)}$ and $z^p\in K_{(2p)}$ by
Lemma \ref{little}(iii). Let \begin{equation*}
x_1=1+2p\zeta,~x_2=1+2p(\zeta^2-\zeta^3+\zeta^4). \end{equation*}
Then we get \begin{eqnarray} &&h(\varphi^*(x_1))\equiv
h(1+2p(\zeta+\zeta^3)))\equiv\left[\begin{matrix} 1-2p & -2p & -2p &
0\\ 0 & 1-2p & 0 & -2p\\ 2p & 2p & 1 & 0\\ 2p & 4p & 2p &
1\end{matrix}\right]\pmod{2p^2},\label{h2}\\
&&h(\varphi^*(x_2))\equiv h(1+2p(\zeta+2\zeta^2-\zeta^3))\equiv
\left[\begin{matrix}1+2p & 6p & -2p & 4p\\ -4p & 1-2p & 4p & -2p\\
2p & -2p & 1-4p & 4p\\ -2p & -4p & -6p & 1
\end{matrix}\right]\pmod{2p^2},\quad\quad\quad\label{h3} \end{eqnarray} and
\begin{equation}\label{wnu}
\nu(h(\varphi^*(x_1)))\equiv\nu(h(\varphi^*(x_2)))\equiv1-2p\pmod{2p^2}.
\end{equation} Here we observe that $h(\varphi^*(x_k))$ ($k=1,2$)
belongs to $G_{2p^2}$.

\begin{lemma}\label{2p^2equal}
Let $a,b,c,d\in\mathbb{Z}$.
\begin{itemize}
\item[\textup{(i)}] We have the formula
\begin{eqnarray*}
&&\Phi_{\left[\begin{smallmatrix}a/p\\b/p\\c/p\\d/p\end{smallmatrix}\right]}(Z_0)^{(\frac{K_{(2p^2)}/K}{(x_k)})}
\quad(k=1,2)\\
&=&\left\{\begin{array}{ll}e((-a^2+2ad-b^2-c^2-2cd-2d^2)/p)
\Phi_{\left[\begin{smallmatrix}a/p\\b/p\\c/p\\d/p\end{smallmatrix}\right]}(Z_0) & \textrm{if}~k=1,\\
e((-a^2+4ab-4ac-b^2+4bc-c^2+2cd+2d^2)/p)
\Phi_{\left[\begin{smallmatrix}a/p\\b/p\\c/p\\d/p\end{smallmatrix}\right]}(Z_0)
& \textrm{if}~k=2.
\end{array}\right.
\end{eqnarray*}
\item[\textup{(ii)}] $\zeta_{2p^2}^{(\frac{K_{(2p^2)}/K}{(x_k)})}=\zeta_{2p^2}^{1-2p}$ \textup{(}$k=1,2$\textup{)}.
\item[\textup{(iii)}]
Assume that
$\Phi_{\left[\begin{smallmatrix}1/p\\0\\0\\0\end{smallmatrix}\right]}(Z_0)$
and
$\Phi_{\left[\begin{smallmatrix}a/p\\b/p\\c/p\\d/p\end{smallmatrix}\right]}(Z_0)$
are nonzero. If
$\Phi_{\left[\begin{smallmatrix}1/p\\0\\0\\0\end{smallmatrix}\right]}(Z_0)^{2p^2}
=\Phi_{\left[\begin{smallmatrix}a/p\\b/p\\c/p\\d/p\end{smallmatrix}\right]}(Z_0)^{2p^2}$,
then $-2ab+2ac+ad-2bc-2cd-2d^2\equiv0\pmod{p}$.
\end{itemize}
\end{lemma}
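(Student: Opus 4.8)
For part (i) the plan is to pass from the Galois action to a symplectic action by Shimura's reciprocity law (Proposition \ref{reciprocity}) and then to evaluate that action by Remark \ref{oddtransf}(ii). Write $\mathbf{r}={}^{t}[a/p,\,b/p]$, $\mathbf{s}={}^{t}[c/p,\,d/p]$ and put $\alpha_k=h(\varphi^*(x_k))$, which lies in $G_{2p^2}$ with $\nu(\alpha_k)\equiv1-2p\pmod{2p^2}$ by (\ref{wnu}). Proposition \ref{reciprocity} identifies $\Phi_{\left[\begin{smallmatrix}\mathbf{r}\\\mathbf{s}\end{smallmatrix}\right]}(Z_0)^{(\frac{K_{(2p^2)}/K}{(x_k)})}$ with $\Phi_{\left[\begin{smallmatrix}\mathbf{r}\\\mathbf{s}\end{smallmatrix}\right]}(Z_0)^{\alpha_k}$, and Remark \ref{oddtransf}(ii) rewrites the latter as
\[
e\big(((1-2p)\,{}^{t}\mathbf{r}\mathbf{s}-{}^{t}\mathbf{r}'\mathbf{s}')/2\big)\,\Phi_{{}^{t}\alpha_k\left[\begin{smallmatrix}\mathbf{r}\\\mathbf{s}\end{smallmatrix}\right]}(Z_0),\qquad\left[\begin{matrix}\mathbf{r}'\\\mathbf{s}'\end{matrix}\right]={}^{t}\alpha_k\left[\begin{matrix}\mathbf{r}\\\mathbf{s}\end{matrix}\right].
\]
Reading the entries of ${}^{t}\alpha_k$ off (\ref{h2}) and (\ref{h3}), I would first check that ${}^{t}\alpha_k\left[\begin{smallmatrix}\mathbf{r}\\\mathbf{s}\end{smallmatrix}\right]-\left[\begin{smallmatrix}\mathbf{r}\\\mathbf{s}\end{smallmatrix}\right]$ lies in $\mathbb{Z}^4$, so that by Proposition \ref{transl} the theta constant on the right equals $e({}^{t}\mathbf{r}\mathbf{b})\,\Phi_{\left[\begin{smallmatrix}\mathbf{r}\\\mathbf{s}\end{smallmatrix}\right]}(Z_0)$, where $\mathbf{b}\in\mathbb{Z}^2$ is the lower half of that integer vector. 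Collecting the two phases and simplifying then yields the stated quadratic exponent in $a,b,c,d$.

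For part (ii) I would realise $\zeta_{2p^2}=e(1/(2p^2))$ as the value at $Z_0$ of the constant function $\zeta_{2p^2}\in\mathcal{F}_{2p^2}$. Shimura's reciprocity law gives $\zeta_{2p^2}^{(\frac{K_{(2p^2)}/K}{(x_k)})}=\zeta_{2p^2}^{\alpha_k}$, and because this function is constant the symplectic lift occurring in Proposition \ref{Groupaction}(iii) acts trivially by composition, so only the $\iota(\nu(\alpha_k))^{-1}$ contribution survives. By Proposition \ref{Groupaction}(i) this raises the Fourier coefficient $\zeta_{2p^2}$ to its $\nu(\alpha_k)\equiv1-2p$ power, whence $\zeta_{2p^2}^{(\frac{K_{(2p^2)}/K}{(x_k)})}=\zeta_{2p^2}^{1-2p}$.

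For part (iii) set $\Phi_1=\Phi_{\left[\begin{smallmatrix}1/p\\0\\0\\0\end{smallmatrix}\right]}(Z_0)$ and $\Phi_2=\Phi_{\left[\begin{smallmatrix}a/p\\b/p\\c/p\\d/p\end{smallmatrix}\right]}(Z_0)$, both nonzero and both in $K_{(2p^2)}$ by Lemma \ref{little}(iii). The hypothesis $\Phi_1^{2p^2}=\Phi_2^{2p^2}$ forces $\eta:=\Phi_1/\Phi_2$ to be a $2p^2$-th root of unity in $K_{(2p^2)}$, hence a power of $\zeta_{2p^2}$. Applying $\sigma_k:=(\frac{K_{(2p^2)}/K}{(x_k)})$ to $\Phi_1=\eta\Phi_2$ and invoking (i) on $\Phi_1,\Phi_2$ together with (ii) on $\eta$, and noting that the quadratic form $Q_k$ of (i) satisfies $Q_k(1,0,0,0)=-1$ for both $k$, cancellation of $\Phi_2$ leaves $\eta^{-2p}=e((-1-Q_k(a,b,c,d))/p)$. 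Since the left-hand side does not depend on $k$, equating the cases $k=1$ and $k=2$ gives $Q_1\equiv Q_2\pmod p$; dividing the congruence $Q_1-Q_2\equiv0$ by $2$ (legitimate as $p$ is odd) produces exactly $-2ab+2ac+ad-2bc-2cd-2d^2\equiv0\pmod p$.

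The substantive step is the phase bookkeeping in (i): one must verify that ${}^{t}\alpha_k$ fixes $\left[\begin{smallmatrix}\mathbf{r}\\\mathbf{s}\end{smallmatrix}\right]$ modulo $\mathbb{Z}^4$ and then that the $G_{2p^2}$-action phase of Remark \ref{oddtransf}(ii) combines with the translation phase of Proposition \ref{transl} into a single quadratic form, with every half-integer and $p^2$-denominator term either cancelling or being integral. Once (i) is in hand, (ii) is a one-line constant-function computation and (iii) is formal, the only idea being to eliminate the unknown exponent of $\eta$ by playing $\sigma_1$ and $\sigma_2$ against each other.
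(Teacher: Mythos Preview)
Your proposal is correct and follows essentially the same route as the paper: for (i) you combine Proposition~\ref{reciprocity}, Remark~\ref{oddtransf}(ii), the explicit matrices (\ref{h2})--(\ref{h3}), and the translation formula of Proposition~\ref{transl}; for (ii) you reduce to the $\iota$-action on a constant function via Proposition~\ref{Groupaction}; and for (iii) you eliminate the auxiliary root of unity by comparing the two Artin symbols, exactly as the paper does (the paper writes this as the single ratio $\Phi_1^{\sigma_1}/\Phi_1^{\sigma_2}$, but the content is identical to your $\eta^{-2p}$ computation).
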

\begin{proof}
(i) We derive that
\begin{eqnarray*}
&&\Phi_{\left[\begin{smallmatrix}a/p\\b/p\\c/p\\d/p\end{smallmatrix}\right]}(Z_0)^{(\frac{K_{(2p^2)}/K}{(x_k)})}
\quad(k=1,2)\\
&=&\Phi_{\left[\begin{smallmatrix}a/p\\b/p\\c/p\\d/p\end{smallmatrix}\right]}(Z)^{h(\varphi^*(x_k))}(Z_0)
\quad\textrm{by Proposition \ref{reciprocity}}\\
&=&e(({^t}\mathbf{r}(1-2p)\mathbf{s}-{^t}\mathbf{r}'\mathbf{s}')/2)
\Phi_{\left[\begin{smallmatrix}\mathbf{r}'\\\mathbf{s}'\end{smallmatrix}\right]}(Z_0)
\quad\textrm{where}~\mathbf{r}=\left[\begin{matrix}a/p\\b/p\end{matrix}\right],~
\mathbf{s}=\left[\begin{matrix}c/p\\d/p\end{matrix}\right]~\textrm{and}~
\left[\begin{matrix}\mathbf{r}'\\\mathbf{s}'\end{matrix}\right]=
{^t}h(\varphi^*(x_k))\left[\begin{matrix}\mathbf{r}\\\mathbf{s}\end{matrix}\right],\\
&&\textrm{by Remark \ref{oddtransf}(ii) and (\ref{wnu})}\\
&=&\left\{\begin{array}{ll}e((a^2+b^2-c^2-2cd-2d^2)/p)
\Phi_{\left[\begin{smallmatrix}a/p\\b/p\\c/p\\d/p\end{smallmatrix}\right]
+\left[\begin{smallmatrix}-2a+2c+2d\\-2a-2b+2c+4d\\
-2a+2d\\-2b
\end{smallmatrix}\right]
}(Z_0) & \textrm{if}~k=1,\\
e((a^2-4ab+b^2-c^2+2cd+2d^2)/p)
\Phi_{\left[\begin{smallmatrix}a/p\\b/p\\c/p\\d/p\end{smallmatrix}\right]
+\left[\begin{smallmatrix}2a-4b+2c-2d\\
6a-2b-2c-4d\\-2a+4b-4c-6d\\
4a-2b+4c
\end{smallmatrix}\right]
}(Z_0) & \textrm{if}~k=2,
\end{array}\right.\\
&&\textrm{by (\ref{h2}), (\ref{h3}) and Lemma \ref{determine}}\\
&=&\left\{\begin{array}{ll}e((-a^2+2ad-b^2-c^2-2cd-2d^2)/p)
\Phi_{\left[\begin{smallmatrix}a/p\\b/p\\c/p\\d/p\end{smallmatrix}\right]}(Z_0) & \textrm{if}~k=1,\\
e((-a^2+4ab-4ac-b^2+4bc-c^2+2cd+2d^2)/p)
\Phi_{\left[\begin{smallmatrix}a/p\\b/p\\c/p\\d/p\end{smallmatrix}\right]}(Z_0)
& \textrm{if}~k=2,
\end{array}\right.\\
&&\textrm{by Proposition \ref{transl}}.
\end{eqnarray*}
(ii) For $k=1,2$ we see that
\begin{eqnarray*}
\zeta_{2p^2}^{(\frac{K_{(2p^2)}/K}{(x_k)})}&=&\zeta_{2p^2}^{h(\varphi^*(x_k))}\quad\textrm{by
Proposition \ref{reciprocity}}\\
&=&\zeta_{2p^2}^{\nu(h(\varphi^*(x_k)))}\quad\textrm{by
Proposition \ref{Groupaction}}\\
&=&\zeta_{2p^2}^{1-2p}\quad\textrm{by (\ref{wnu})}.
\end{eqnarray*}
(iii) If
$\Phi_{\left[\begin{smallmatrix}1/p\\0\\0\\0\end{smallmatrix}\right]}(Z_0)^{2p^2}
=\Phi_{\left[\begin{smallmatrix}a/p\\b/p\\c/p\\d/p\end{smallmatrix}\right]}(Z_0)^{2p^2}$,
then we deduce that
\begin{eqnarray*}
&&\Phi_{\left[\begin{smallmatrix}1/p\\0\\0\\0\end{smallmatrix}\right]}(Z_0)
^{(\frac{K_{(2p^2)}/K}{(x_1)})}/
\Phi_{\left[\begin{smallmatrix}1/p\\0\\0\\0\end{smallmatrix}\right]}(Z_0)
^{(\frac{K_{(2p^2)}/K}{(x_2)})}\\&=&1\quad\textrm{by (i)}\\
&=&(\xi\Phi_{\left[\begin{smallmatrix}a/p\\b/p\\c/p\\d/p\end{smallmatrix}\right]}(Z_0))
^{(\frac{K_{(2p^2)}/K}{(x_1)})}/
(\xi\Phi_{\left[\begin{smallmatrix}a/p\\b/p\\c/p\\d/p\end{smallmatrix}\right]}(Z_0))
^{(\frac{K_{(2p^2)}/K}{(x_2)})}\quad
\textrm{for some $2p^2$th root of unity $\xi$}\\
&=&\xi^{1-2p}\Phi_{\left[\begin{smallmatrix}a/p\\b/p\\c/p\\d/p\end{smallmatrix}\right]}(Z_0)
^{(\frac{K_{(2p^2)}/K}{(x_1)})}/
\xi^{1-2p}\Phi_{\left[\begin{smallmatrix}a/p\\b/p\\c/p\\d/p\end{smallmatrix}\right]}(Z_0)
^{(\frac{K_{(2p^2)}/K}{(x_2)})}\quad \textrm{by (ii)}\\
&=&e((-4ab+4ac+2ad-4bc-4cd-4d^2)/p)\quad\textrm{by (i)}.
\end{eqnarray*}
This proves (iii).
\end{proof}

Now we let
\begin{equation*}
T=\{\sigma\in\mathrm{Gal}(K_{(2p)}/K)~|~
(z^{\sigma'})^{(\frac{K_{(2p^2)}/K}{(x_1)})}=(z^{\sigma'})^{(\frac{K_{(2p^2)}/K}{(x_2)})}~
\textrm{for some extension $\sigma'$ of $\sigma$ to $K_{(2p^2)}$}\}.
\end{equation*}
Since $K_{(2p^2)}/K$ is an abelian extension, $T$ is obviously a
subgroup of $\mathrm{Gal}(K_{(2p)}/K)$. Furthermore, if $\sigma\in
T$, then
$(z^{\sigma'})^{(\frac{K_{(2p^2)}/K}{(x_1)})}=(z^{\sigma'})^{(\frac{K_{(2p^2)}/K}{(x_2)})}$
for all extensions $\sigma'$ of $\sigma$ to $K_{(2p^2)}$. Indeed,
let $\sigma$ be an element of $T$ with some extension $\sigma'$
satisfying
$(z^{\sigma'})^{(\frac{K_{(2p^2)}/K}{(x_1)})}=(z^{\sigma'})^{(\frac{K_{(2p^2)}/K}{(x_2)})}$.
Now that $z^p\in K_{(2p)}$, if $\sigma''$ is another extension of
$\sigma$ to $K_{(2p^2)}$, then $(z^p)^{\sigma''}=(z^p)^{\sigma'}$.
So we get $z^{\sigma''}=\xi z^{\sigma'}$ for some $p$th root of
unity $\xi$. And, since $\xi^{(\frac{K_{(2p^2)}/K}{(x_1)})}=
\xi^{(\frac{K_{(2p^2)}/K}{(x_2)})}=\xi^{1-2p}$ ($k=1,2$) by Lemma
\ref{2p^2equal}(ii), we obtain that
\begin{equation*}
(z^{\sigma''})^{(\frac{K_{(2p^2)}/K}{(x_1)})}= (\xi
z^{\sigma'})^{(\frac{K_{(2p^2)}/K}{(x_1)})}= (\xi
z^{\sigma'})^{(\frac{K_{(2p^2)}/K}{(x_2)})}=
(z^{\sigma''})^{(\frac{K_{(2p^2)}/K}{(x_2)})}.
\end{equation*}
\par
Let $F$ be the subfield of $K_{(2p)}$ fixed by $T$. Then by the
Galois theory we see the isomorphism
$\mathrm{Gal}(K_{(2p)}/F)\simeq T$.

\begin{theorem}\label{howlarge}
$K(z^{2p^2})$ contains $F$.
\end{theorem}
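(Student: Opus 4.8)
The plan is to recast the containment $F\subseteq K(z^{2p^2})$ as a reverse containment of Galois groups and then settle it using the explicit reciprocity computations already packaged in Lemma \ref{2p^2equal}. First I would note that $z^{2p^2}=(z^p)^{2p}$ lies in $K_{(2p)}$ by Lemma \ref{little}(iii), so that both $F$ and $K(z^{2p^2})$ are intermediate fields of the abelian extension $K_{(2p)}/K$ and the Galois correspondence applies. Since $\mathrm{Gal}(K_{(2p)}/F)=T$, the desired inclusion $F\subseteq K(z^{2p^2})$ is equivalent to $\mathrm{Gal}(K_{(2p)}/K(z^{2p^2}))\subseteq T$. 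Hence it suffices to prove that every $\sigma\in\mathrm{Gal}(K_{(2p)}/K)$ fixing $z^{2p^2}$ lies in $T$.

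So I would fix such a $\sigma$, choose any extension $\sigma'$ of $\sigma$ to $K_{(2p^2)}$, and abbreviate $\tau_k=(\frac{K_{(2p^2)}/K}{(x_k)})$ for $k=1,2$. Because $z^{2p^2}\in K_{(2p)}$ is fixed by $\sigma$, we have $(z^{\sigma'})^{2p^2}=\sigma'(z^{2p^2})=z^{2p^2}$; as $z\neq0$ this forces $z^{\sigma'}=\eta z$ for some $2p^2$th root of unity $\eta\in K_{(2p^2)}$. The membership $\sigma\in T$ is exactly the statement that $\tau_1$ and $\tau_2$ send $z^{\sigma'}=\eta z$ to the same value, so the whole problem reduces to comparing the actions of $\tau_1$ and $\tau_2$ on the factors $\eta$ and $z$ separately.

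Both comparisons are supplied by Lemma \ref{2p^2equal}. Part (ii) gives $\eta^{\tau_1}=\eta^{\tau_2}=\eta^{1-2p}$ (apply $\zeta_{2p^2}^{\tau_k}=\zeta_{2p^2}^{1-2p}$ to the appropriate power, since $\eta$ is a $2p^2$th root of unity), so the two symbols agree on $\eta$. Part (i), specialized to the vector $(a,b,c,d)=(1,0,0,0)$, makes both exponents collapse to $-1/p$, whence $z^{\tau_1}=z^{\tau_2}=e(-1/p)z$; thus the two symbols also agree on $z$. Combining the two equalities gives $(z^{\sigma'})^{\tau_1}=\eta^{1-2p}e(-1/p)z=(z^{\sigma'})^{\tau_2}$, which is precisely the defining condition of $T$. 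Therefore $\sigma\in T$, and the Galois correspondence delivers $F\subseteq K(z^{2p^2})$.

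I do not expect a deep obstacle once Lemma \ref{2p^2equal} is in hand; the argument is essentially bookkeeping. The two points that must be handled with care are the direction of the Galois correspondence (the inclusion of fields reverses the inclusion of groups) and the observation that fixing $z^{2p^2}$ determines $z^{\sigma'}$ only up to a $2p^2$th root of unity---exactly the ambiguity on which Lemma \ref{2p^2equal}(ii) guarantees $\tau_1$ and $\tau_2$ act identically. I would also recall, as already checked in the paragraph preceding the theorem, that the condition defining $T$ is independent of the chosen extension $\sigma'$, so the conclusion $\sigma\in T$ is unambiguous; finally I would invoke $z\neq0$, i.e.\ the nonvanishing of the theta value $\Phi_{\left[\begin{smallmatrix}1/p\\0\\0\\0\end{smallmatrix}\right]}(Z_0)$, which is the standing nonvanishing input (Lemma \ref{little}(i) and $\left[\begin{smallmatrix}1/p\\0\\0\\0\end{smallmatrix}\right]\notin\Sigma_-$ via Proposition \ref{Igusazero}), needed to write $z^{\sigma'}=\eta z$.
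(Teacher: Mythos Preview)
Your proof is correct and follows the same route as the paper's: reduce the containment to $\mathrm{Gal}(K_{(2p)}/K(z^{2p^2}))\subseteq T$, write $z^{\sigma'}=\eta z$ for a $2p^2$th root of unity $\eta$, and invoke Lemma~\ref{2p^2equal}(i),(ii) to see that both Artin symbols send $\eta z$ to $\eta^{1-2p}\zeta_p^{-1}z$. One small caveat on your final parenthetical: Proposition~\ref{Igusazero} only rules out identical vanishing of $\Theta(\mathbf{0},Z;\mathbf{r},\mathbf{s})$ as a function of $Z$, not vanishing at the specific point $Z_0$, so it does not by itself certify $z\neq0$; this is harmless, however, since if $z=0$ then $K(z^{2p^2})=K$ and simultaneously $T=\mathrm{Gal}(K_{(2p)}/K)$ (the defining condition is vacuous), so $F=K$ and the theorem is trivial.
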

\begin{proof}
By the Galois theory the assertion is equivalent to saying that
$\mathrm{Gal}(K_{(2p)}/K(z^{2p^2}))$ is a subgroup of $T$. Let
$\sigma\in\mathrm{Gal}(K_{(2p)}/K(z^{2p^2}))$, that is, $\sigma$ is
an element of $\mathrm{Gal}(K_{(2p)}/K)$ which fixes $z^{2p^2}$. If
$\sigma'$ is an extension of $\sigma$ to $K_{(2p^2)}$, then we have
$z^{\sigma'}=\xi z$ for some $2p^2$th root of unity. Since both
$(\frac{K_{(2p^2)}/K}{(x_1)})$ and $(\frac{K_{(2p^2)}/K}{(x_2)})$
send $\xi z$ to $\xi^{1-2p}\zeta_p^{-1}z$ by Lemma
\ref{2p^2equal}(i) and (ii), $\sigma$ belongs to $T$. This proves
the theorem.
\end{proof}

\begin{theorem}\label{belong}
Let $x=a_0+a_1\zeta+a_2\zeta^2+a_3\zeta^3+a_4\zeta^4$ for integers
$a_0,a_1,a_2,a_3,a_4$ such that $\mathrm{N}_{K/\mathbb{Q}}(x)$ is
prime to $2p$ and $h(\varphi^*(x))\in G_{2p^2}$. Assume that $z$ is
nonzero. If $(\frac{K_{(2p)}/K}{(x)})$ fixes $z^{2p^2}$, then we
have $-2ab+2ac+ad-2bc-2cd-2d^2\equiv0\pmod{p}$, where
\begin{eqnarray*}
a&=&a_0^2-a_0a_1-a_0a_3+a_1a_2+a_1a_3-a_1a_4-a_2^2+a_2a_4,\\
b&=&-a_0a_1+a_0a_2-a_0a_3+a_0a_4+a_1a_2-a_2^2+a_3^2-a_3a_4,\\
c&=&-a_0a_1-a_0a_2+a_0a_3+a_0a_4+a_1^2-a_1a_3+a_2a_4-a_4^2,\\
d&=&a_0a_2-a_0a_3+a_1a_3-a_1a_4-a_2^2+a_2a_3-a_3a_4+a_4^2.
\end{eqnarray*}
\end{theorem}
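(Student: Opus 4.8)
The plan is to compute the single Galois conjugate $z^{(\frac{K_{(2p^2)}/K}{(x)})}$ by Shimura's reciprocity law, to recognize it up to a root of unity as a theta value $\Phi_{\left[\begin{smallmatrix}a/p\\b/p\\c/p\\d/p\end{smallmatrix}\right]}(Z_0)$, and then to pass to $2p^2$-th powers so that the hypothesis collapses to an equality of the type handled by Lemma \ref{2p^2equal}(iii).

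First I would note that $\Phi_{\left[\begin{smallmatrix}1/p\\0\\0\\0\end{smallmatrix}\right]}$ is finite at $Z_0$ with value $z\in K_{(2p^2)}$ (Lemma \ref{little}(i),(iii)), and that $(x)$, being prime to $2p$, is prime to $2p^2$; hence Proposition \ref{reciprocity} applies with $M=2p^2$ and gives
\begin{equation*}
z^{(\frac{K_{(2p^2)}/K}{(x)})}=\Phi_{\left[\begin{smallmatrix}1/p\\0\\0\\0\end{smallmatrix}\right]}(Z)^{h(\varphi^*(x))}(Z_0).
\end{equation*}
Since $h(\varphi^*(x))\in G_{2p^2}$ by hypothesis, I would apply Remark \ref{oddtransf}(ii) with $M=p$, $\mathbf{r}=\left[\begin{smallmatrix}1/p\\0\end{smallmatrix}\right]$ and $\mathbf{s}=\mathbf{0}$; because $\mathbf{s}=\mathbf{0}$ the phase reduces to $e(-{}^t\mathbf{r}'\mathbf{s}'/2)$, so that
\begin{equation*}
z^{(\frac{K_{(2p^2)}/K}{(x)})}=e(-{}^t\mathbf{r}'\mathbf{s}'/2)\,\Phi_{\left[\begin{smallmatrix}\mathbf{r}'\\\mathbf{s}'\end{smallmatrix}\right]}(Z_0),\qquad \left[\begin{matrix}\mathbf{r}'\\\mathbf{s}'\end{matrix}\right]={}^t h(\varphi^*(x))\left[\begin{matrix}1/p\\0\\0\\0\end{matrix}\right].
\end{equation*}

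The computational core, and what I expect to be the main obstacle, is to verify that $\left[\begin{smallmatrix}\mathbf{r}'\\\mathbf{s}'\end{smallmatrix}\right]\equiv\left[\begin{smallmatrix}a/p\\b/p\\c/p\\d/p\end{smallmatrix}\right]\pmod{\mathbb{Z}^4}$ for the stated $a,b,c,d$. The vector ${}^t h(\varphi^*(x))\left[\begin{smallmatrix}1/p\\0\\0\\0\end{smallmatrix}\right]$ is $1/p$ times the first row of $h(\varphi^*(x))$, that is, the coordinate vector of $\varphi^*(x)\xi_1=\varphi^*(x)\zeta^2$ in the basis $\{\xi_1,\xi_2,\xi_3,\xi_4\}$. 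Since $\varphi_1^{-1}$ is the identity and $\varphi_2^{-1}$ sends $\zeta\mapsto\zeta^3$, I would write $\varphi^*(x)=x\cdot(a_0+a_2\zeta+a_4\zeta^2+a_1\zeta^3+a_3\zeta^4)$, expand this product and reduce it by $1+\zeta+\zeta^2+\zeta^3+\zeta^4=0$ to express $\varphi^*(x)$ in powers of $\zeta$ with coefficients quadratic in $a_0,\ldots,a_4$, and finally read off the coordinates using $1=-\xi_1-\xi_2-\xi_4$, $\zeta=\xi_3$, $\zeta^2=\xi_1$, $\zeta^3=\xi_4-\xi_3$, $\zeta^4=\xi_2$. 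This reduction is elementary but error-prone, involving the ten cross terms of the product and their passage to the $\xi_j$-basis; as a consistency check, specializing $x=x_1=1+2p\zeta$ must reproduce the first row $[\,1-2p,\ -2p,\ -2p,\ 0\,]$ of the matrix in (\ref{h2}) modulo $2p^2$.

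Finally, I would raise the second displayed identity to the $2p^2$-th power. This annihilates the root of unity, since $2p^2\cdot{}^t\mathbf{r}'\mathbf{s}'/2$ equals the integer $ac+bd$; and because $z^{2p^2}\in K_{(2p)}$, the restriction of $(\frac{K_{(2p^2)}/K}{(x)})$ to $K_{(2p)}$ is $(\frac{K_{(2p)}/K}{(x)})$, whence
\begin{equation*}
(z^{2p^2})^{(\frac{K_{(2p)}/K}{(x)})}=\Phi_{\left[\begin{smallmatrix}a/p\\b/p\\c/p\\d/p\end{smallmatrix}\right]}(Z_0)^{2p^2}.
\end{equation*}
The hypothesis that $(\frac{K_{(2p)}/K}{(x)})$ fixes $z^{2p^2}$ then forces $z^{2p^2}=\Phi_{\left[\begin{smallmatrix}a/p\\b/p\\c/p\\d/p\end{smallmatrix}\right]}(Z_0)^{2p^2}$; since $z\neq0$ both theta values are nonzero, so Lemma \ref{2p^2equal}(iii) applies verbatim and yields $-2ab+2ac+ad-2bc-2cd-2d^2\equiv0\pmod{p}$, as desired.
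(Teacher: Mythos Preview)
Your proposal is correct and follows essentially the same route as the paper: compute the action of $h(\varphi^*(x))$ on the theta constant via Remark~\ref{oddtransf}(ii), identify the resulting characteristic as $1/p$ times the first row of $h(\varphi^*(x))$ (namely $[a,b,c,d]$), and then invoke Lemma~\ref{2p^2equal}(iii). The only cosmetic difference is that the paper applies the first formula of Remark~\ref{oddtransf}(ii) directly to $\Phi^{2p^2}$ (so no phase factor ever appears), whereas you apply the second formula to $\Phi$ itself and then kill the resulting root of unity by raising to the $2p^2$-th power; both are equally valid.
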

\begin{proof}
One can readily verify that the first row of $h(\varphi^*(x))$ is
$\left[\begin{matrix}a & b & c &d\end{matrix}\right]$, namely
\begin{equation*}
x^{\varphi_1^{-1}}x^{\varphi_2^{-1}}\xi_1=
(a_0+a_1\zeta+a_2\zeta^2+a_3\zeta^3+a_4\zeta^4)
(a_0+a_1\zeta^3+a_2\zeta+a_3\zeta^4+a_4\zeta^2)\zeta^2=
a\xi_1+b\xi_2+c\xi_3+d\xi_4.
\end{equation*}
And we have
\begin{eqnarray*}
(z ^{2p^2})^{(\frac{K_{(2p)}/K}{(x)})}&=&
(\Phi_{\left[\begin{smallmatrix}1/p\\0\\0\\0\end{smallmatrix}\right]}(Z)^{2p^2})^{h(\varphi^*(x))}(Z_0)
\quad\textrm{by Proposition \ref{reciprocity}}\\
&=&\Phi_{{^t}h(\varphi^*(x))\left[\begin{smallmatrix}1/p\\0\\0\\0\end{smallmatrix}\right]}(Z)^{2p^2}(Z_0)
\quad\textrm{by Remark \ref{oddtransf}(ii)}\\
&=&\Phi_{\left[\begin{smallmatrix}a/p\\b/p\\c/p\\d/p\end{smallmatrix}\right]}(Z_0)^{2p^2}.
\end{eqnarray*}
We then conclude the theorem by Lemma \ref{2p^2equal}(iii).
\end{proof}

\begin{remark}
Let $x$ be of the form $1+2y$ for some $y\in\mathcal{O}_K$ such that
$\mathrm{N}_{K/\mathbb{Q}}(x)$ is prime to $2p$. Since
\begin{equation*}
h(\varphi^*(x))=h((1+2y)(1+2y)^{\varphi_2^{-1}})\equiv h(1)\equiv
I_{2g}\pmod{2},
\end{equation*}
$h(\varphi^*(x))$ belongs to $G_{2p^2}$ automatically.
\end{remark}

\begin{example}
We follow the notations and assumption in Theorem \ref{belong}. Let
$x=1+2(\zeta+\zeta^2)$, then $\mathrm{N}_{K/\mathbb{Q}}(x)=5$. So we
assume $p\neq5$. Since $a=-1$, $b=0$, $c=0$, $d=-2$, we get
$-2ab+2ac+ad-2bc-2cd-2d^2\equiv-6\pmod{p}$. Therefore, if $p\neq3$,
then $(\frac{K_{(2p)}/K}{(x)})$ does not fix $z^{2p^2}$ by Theorem
\ref{belong}.
\end{example}

\section {Remarks on primitive generators}

In this section we shall introduce two useful methods of combining
two generators of an abelian extension to get a primitive one. We
begin with an example.

\begin{example}
Following the same notations as in $\S$\ref{construction} we
consider the field $K(z^{p},\zeta_{25})$. Note that $(2z)^{p}$ is an
algebraic integer which lies in $K_{(2p)}$ by Lemma \ref{little}(ii)
and (iii). The prime ideal $(1-\zeta_{5})\mathcal{O}_K$ of $K$,
which lies above the prime ideal $5\mathbb{Z}$ of $\mathbb{Q}$, is
totally ramified in the extension $K(\zeta_{25})/K$ with
ramification index
$[K(\zeta_{25}):K]=[\mathbb{Q}(\zeta_{25}):\mathbb{Q}]/[\mathbb{Q}(\zeta_{5}):\mathbb{Q}]=5$.
On the other hand, all prime ideals of $K$ which are ramified in
$K_{(2p)}/K$ must divide the ideal $2p\mathcal{O}_K$. So we have
$[K(z^{p},\zeta_{25}):K(z^{p})]=5$. Furthermore, a conjugate of
$\zeta_{25}$ over $K(z^{p})$ is of the form $\zeta_{25}^{1+5k}$
($k=0,1,2,3,4$). And we get
\begin{eqnarray*}
\mathrm{Tr}_{K(z^{p},\zeta_{25})/K(z^{p})}(\zeta_{25})&=&\sum_{k=0}^4
\zeta_{25}^{1+5k}~=~ \zeta_{25}\sum_{k=0}^4\zeta_5^k~=~
0,\\
\mathrm{N}_{K(z^{p},\zeta_{25})/K(z^{p})}(3\zeta_{25}+1)&=&\prod_{k=0}^4
(3\zeta_{25}^{1+5k}+1)~=~
(-3\zeta_{25})^5\prod_{k=0}^4(-\zeta_5^k+(-3\zeta_{25})^{-1})~=~
243\zeta_5+1.
\end{eqnarray*}
 We then obtain
primitive generators of $K(z^{p},\zeta_{25})$ over $K$ by Theorems
\ref{primitive1} and \ref{primitive2} as follows:
\begin{equation*}
K(z^{p},\zeta_{25})=K(z^{p}+\zeta_{25})=K((3(2z)^{p}+1)(3\zeta_{25}+1)^{-5}
(243\zeta_5+1)).
\end{equation*}
\end{example}

\begin{theorem}\label{primitive1}
Let $L$ be an abelian extension of a number field $K$. Suppose that
$L=K(x,y)$ for some $x,y\in L$. Let $\ell=[L:K(x)]$, and $a$ and $b$
be any nonzero elements of $K$. Then we have
\begin{equation*}
L=K(ax+b(\ell y-\mathrm{Tr}_{L/K(x)}(y))).
\end{equation*}
\end{theorem}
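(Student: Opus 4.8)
The plan is to show that the single element
$w = ax + b(\ell y - \mathrm{Tr}_{L/K(x)}(y))$
already generates $L$ over $K$. Since $L/K$ is abelian it is in particular Galois, so $K(w) = L$ will follow once I prove that every $\sigma \in \mathrm{Gal}(L/K)$ with $\sigma(w) = w$ is the identity; equivalently, that the stabilizer of $w$ in $\mathrm{Gal}(L/K)$ is trivial. Throughout I abbreviate $y' = \ell y - \mathrm{Tr}_{L/K(x)}(y)$, and I first record the basic fact that $\mathrm{Tr}_{L/K(x)}(y') = \ell\,\mathrm{Tr}_{L/K(x)}(y) - \ell\,\mathrm{Tr}_{L/K(x)}(y) = 0$, using that $\mathrm{Tr}_{L/K(x)}(y)$ lies in $K(x)$ and that $[L:K(x)] = \ell$.

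First I would take $\sigma \in \mathrm{Gal}(L/K)$ with $\sigma(w) = w$ and rewrite the relation as $a(\sigma(x) - x) = b(y' - \sigma(y'))$. The decisive step is to apply $\mathrm{Tr}_{L/K(x)}$ to both sides. On the left, $K(x)/K$ is Galois (as $L/K$ is abelian), so $\sigma(x) - x \in K(x)$ and its trace equals $a\ell(\sigma(x) - x)$. On the right I use $\mathrm{Tr}_{L/K(x)}(y') = 0$ together with the computation $\mathrm{Tr}_{L/K(x)}(\sigma(y')) = \sum_{\tau \in \mathrm{Gal}(L/K(x))} \tau\sigma(y') = \sigma(\sum_\tau \tau(y')) = \sigma(\mathrm{Tr}_{L/K(x)}(y')) = 0$, where the middle equality is exactly where abelianness of $\mathrm{Gal}(L/K)$ is indispensable, since it lets me replace $\tau\sigma$ by $\sigma\tau$ and pull $\sigma$ outside the sum. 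Hence $a\ell(\sigma(x) - x) = 0$, and since $a\ell \neq 0$ we obtain $\sigma(x) = x$, i.e. $\sigma \in \mathrm{Gal}(L/K(x))$.

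With $\sigma$ now fixing $K(x)$, the rewritten relation forces $b(y' - \sigma(y')) = 0$, hence $\sigma(y') = y'$. Because $\sigma$ fixes $\mathrm{Tr}_{L/K(x)}(y) \in K(x)$, expanding $y' = \ell y - \mathrm{Tr}_{L/K(x)}(y)$ gives $\ell\,\sigma(y) = \ell y$, so $\sigma(y) = y$. Thus $\sigma$ fixes both $x$ and $y$, and as $L = K(x,y)$ it fixes all of $L$, i.e. $\sigma = \mathrm{id}$. This shows the stabilizer of $w$ in $\mathrm{Gal}(L/K)$ is trivial, and therefore $K(w) = L$.

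The step I expect to be the crux is the trace computation $\mathrm{Tr}_{L/K(x)}(\sigma(y')) = 0$: this is the only place the abelian hypothesis is consumed, and without the commutation $\tau\sigma = \sigma\tau$ one could not pull $\sigma$ out of the sum. It is also worth flagging in the write-up where the two nonvanishing hypotheses are used: $a \neq 0$ turns $a\ell(\sigma(x) - x) = 0$ into $\sigma(x) = x$, while $b \neq 0$ (together with $\ell \neq 0$) turns $\sigma(y') = y'$ into $\sigma(y) = y$, so both assumptions are genuinely needed.
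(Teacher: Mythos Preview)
Your proof is correct. Both your argument and the paper's hinge on the same trace identity --- that $\mathrm{Tr}_{L/K(x)}$ applied to $w=ax+by'$ (with $y'=\ell y-\mathrm{Tr}_{L/K(x)}(y)$) kills the $y'$-part and returns $a\ell x$ --- but you organize the proof differently. The paper works ``from below'': it computes $\mathrm{Tr}_{L/K(x)}(w)=a\ell x$ directly, then invokes abelianness to say that $K(w)/K$ is Galois, so $K(w)$ contains every conjugate of $w$ and hence the trace; this yields $x\in K(w)$, after which $y$ falls out by subtraction. You work ``from above'' via the stabilizer: fix $\sigma$ with $\sigma(w)=w$, trace the relation $a(\sigma(x)-x)=b(y'-\sigma(y'))$, and use abelianness to commute $\sigma$ past the elements of $\mathrm{Gal}(L/K(x))$ so that $\mathrm{Tr}_{L/K(x)}(\sigma(y'))=\sigma(\mathrm{Tr}_{L/K(x)}(y'))=0$, forcing $\sigma(x)=x$ and then $\sigma(y)=y$. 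The paper's use of abelianness (intermediate fields are Galois over $K$) is a bit more conceptual and avoids the explicit commutation step; your stabilizer formulation makes the role of each nonvanishing hypothesis ($a\neq 0$, $b\neq 0$, $\ell\neq 0$) more visibly pinpointed. Either packaging is fine.
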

\begin{proof}
If we set $\varepsilon=ax+b(\ell y-\mathrm{Tr}_{L/K(x)}(y))$, then
we achieve
\begin{eqnarray}
\mathrm{Tr}_{L/K(x)}(\varepsilon)&=&\mathrm{Tr}_{L/K(x)}(ax)+\mathrm{Tr}_{L/K(x)}(b\ell y)+\mathrm{Tr}_{L/K(x)}(-b\mathrm{Tr}_{L/K(x)}(y))\nonumber\\
&=&ax\mathrm{Tr}_{L/K(x)}(1)+b\ell \mathrm{Tr}_{L/K(x)}(y)-b\mathrm{Tr}_{L/K(x)}(y)\mathrm{Tr}_{L/K(x)}(1)\nonumber\\
&=&ax\ell.\label{tr}
\end{eqnarray}
Observe that  since $L/K$ is an abelian extension, any intermediate
field of $L/K$ is an abelian extension of $K$ by Galois theory. This
fact implies that $K(\varepsilon)$ contains $\varepsilon^\sigma$ for
any $\sigma\in\mathrm{Gal}(L/K)$; whence
$\mathrm{Tr}_{L/K(x)}(\varepsilon)$ belongs to $K(\varepsilon)$.
Therefore we deduce that
\begin{eqnarray*}
K(\varepsilon)&=&K(\varepsilon)(\mathrm{Tr}_{L/K(x)}(\varepsilon))~=~K(\mathrm{Tr}_{L/K(x)}(\varepsilon))(\varepsilon)\\
&=&K(ax\ell)(\varepsilon)~=~K(x)(\varepsilon)\quad\textrm{by (\ref{tr})}\\
&=&K(x)(\varepsilon-ax+b\mathrm{Tr}_{L/K(x)}(y))~=~K(x)(b\ell
y)~=~K(x)(y)~=~L.
\end{eqnarray*}
This completes the proof.
\end{proof}

\begin{lemma}\label{power}
Let $K$ be a number field and $x$ be an algebraic integer. Suppose
that $K(x)/K$ is a Galois extension. If $a$ and $b$ are nonzero
integers such that $2<|a/b|$, then we have
\begin{equation*}
K(x)=K((ax+b)^n)\quad\textrm{for a nonzero integer $n$}.
\end{equation*}
\end{lemma}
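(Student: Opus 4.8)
The plan is to exploit that $K(x)/K$ is Galois, so that $K((ax+b)^n)$ corresponds to a subgroup of $G=\mathrm{Gal}(K(x)/K)$ and the whole question reduces to a statement about ratios of conjugates being roots of unity. First I would dispose of the trivial case $x\in K$, where $K(x)=K$ and the equality is immediate for every nonzero $n$; so assume $x\notin K$ and set $\beta=ax+b$, a nonzero algebraic integer with $K(\beta)=K(x)$ since $a\neq0$. For any $n\ge1$ one has $(ax+b)^n\in K(x)$, and because $K(x)/K$ is Galois the field $K(\beta^n)$ is the fixed field of
\[
H=\mathrm{Gal}(K(x)/K(\beta^n))=\{\sigma\in G~|~(\beta^\sigma)^n=\beta^n\}=\{\sigma\in G~|~\beta^\sigma/\beta\ \text{is an $n$th root of unity}\}.
\]
Thus $K(\beta^n)=K(x)$ is equivalent to $H=\{1\}$, and everything comes down to showing that for $\sigma\neq1$ the quotient $\beta^\sigma/\beta$ is never a root of unity.

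The key step is the \emph{claim} that if $\gamma:=\beta^\sigma/\beta$ is a root of unity, then $\gamma=1$. To prove it I would introduce $\delta=(\gamma-1)b/a$ and show it is an algebraic integer whose every conjugate has absolute value strictly less than $1$, which forces $\delta=0$. Integrality follows from
\[
(\gamma-1)\beta=\beta^\sigma-\beta=a(x^\sigma-x)\in a\mathcal{O}_{K(x)},
\]
since $x^\sigma-x$ is an algebraic integer; as $(\gamma-1)ax\in a\mathcal{O}_{K(x)}$ as well, subtraction gives $(\gamma-1)b\in a\mathcal{O}_{K(x)}$, i.e.\ $\delta\in\mathcal{O}_{K(x)}$. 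For the size bound I would use that every conjugate of $\gamma$ is again a root of unity, so for every embedding $\tau$ of $K(x)$ into $\mathbb{C}$ we have $|\tau(\gamma)-1|\le2$; combined with $|b/a|<1/2$, which is exactly the hypothesis $2<|a/b|$, this yields $|\tau(\delta)|=|\tau(\gamma)-1|\,|b/a|\le 2|b/a|<1$. Hence the norm of $\delta$ over $\mathbb{Q}$ is a rational integer of absolute value $<1$, so $\delta=0$ and therefore $\gamma=1$.

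Finally I would assemble the pieces: $\gamma=1$ means $\beta^\sigma=\beta$, hence $x^\sigma=x$ and $\sigma=1$. Consequently no nontrivial $\sigma$ makes $\beta^\sigma/\beta$ a root of unity, so $H=\{1\}$ for every $n\ge1$ and $K((ax+b)^n)=K(x)$; in particular the conclusion holds for any nonzero $n$. The main obstacle, and the only place where the hypothesis $2<|a/b|$ is used, is verifying that $\delta$ is simultaneously an algebraic integer and strictly contracted at every archimedean place: integrality rests on $x$ (hence $x^\sigma-x$) being an algebraic integer, while the strict inequality $|\tau(\delta)|<1$ is precisely what $|b/a|<1/2$ buys in conjunction with the root-of-unity bound $|\tau(\gamma)-1|\le2$.
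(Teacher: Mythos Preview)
Your argument is correct and follows essentially the same route as the paper: both reduce to the key identity $a(x^\sigma-\xi x)=b(\xi-1)$ for the root of unity $\xi=\beta^\sigma/\beta$, then use that the left side is $a$ times a nonzero algebraic integer while every archimedean absolute value of $\xi-1$ is at most $2$, and compare norms to force $|a|\le 2|b|$ (equivalently, your $\delta=0$). The only cosmetic difference is that the paper observes directly that your $\delta=(\gamma-1)b/a$ equals $x^\sigma-\gamma x$, which makes its integrality immediate without the detour through $\beta$.
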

\begin{proof}
We first note that $ax+b$ is nonzero. Otherwise, $x=-b/a$ is a
nonzero rational number less than $1/2$, which contradicts the fact
that $x$ is an algebraic integer. Suppose on the contrary
$K(x)\supsetneq K((ax+b)^n)$. Then there exists a nontrivial element
$\sigma$ of $\mathrm{Gal}(K(x)/K((ax+b)^n))$. That is,
$((ax+b)^n)^\sigma=(ax+b)^n$ but $x^\sigma\neq x$, from which we see
that $ax^\sigma+b=\xi(ax+b)$ for some $|n|$th root of unity $\xi$
($\neq1$). Thus we have
\begin{equation}\label{zeta}
a(x^\sigma-\xi x)=b(\xi-1)~(\neq0).
\end{equation}
Let $\ell=[K(x):\mathbb{Q}]$. Since $x^\sigma-\xi x$ is a nonzero
algebraic integer, we derive that
\begin{equation*}
|\mathrm{N}_{K(x)/\mathbb{Q}}(a(x^\sigma-\xi
x))|=|a|^\ell|\mathrm{N}_{K(x)/\mathbb{Q}}(x^\sigma-\xi x)| \geq
|a|^\ell.
\end{equation*}
On the other hand, since any conjugate of $\xi-1$ over $\mathbb{Q}$
is of the form $\xi^t-1$ for some integer $t$ and $|\xi^t-1|\leq 2$,
we claim that
\begin{equation*}
|\mathrm{N}_{K(x)/\mathbb{Q}}(b(\xi-1))|
=|b|^\ell|\mathrm{N}_{K(x)/\mathbb{Q}}(\xi-1)| \leq|b|^\ell 2^\ell.
\end{equation*}
Hence we get from (\ref{zeta})  $|a|^\ell\leq|b|^\ell2^\ell$, which
contradicts the assumption $2<|a/b|$. Therefore we conclude the
lemma.
\end{proof}

\begin{theorem}\label{primitive2}
Let $L$ be an abelian extension of a number field $K$. Suppose
$L=K(x,y)$ for some algebraic integers $x$ and $y$. Let
$\ell=[L:K(x)]$ and $a$, $b$, $c$, $d$ be nonzero integers such that
$2<|a/b|$ and $2<|c/d|$. Then we have
\begin{equation*}
L=K((ax+b)^n(cy+d)^{-m\ell}\mathrm{N}_{L/K(x)}((cy+d)^m))
\quad\textrm{for nonzero integers $n$ and $m$}.
\end{equation*}
\end{theorem}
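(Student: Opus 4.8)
The plan is to mimic the additive argument of Theorem \ref{primitive1}, replacing traces by norms and the ``trace-zero'' combination by a ``norm-one'' combination. Write $\varepsilon=(ax+b)^n(cy+d)^{-m\ell}\mathrm{N}_{L/K(x)}((cy+d)^m)$ and note first that $ax+b\neq0$ and $cy+d\neq0$: otherwise $x=-b/a$ or $y=-d/c$ would be a rational algebraic integer of absolute value less than $1/2$, exactly as ruled out at the start of the proof of Lemma \ref{power}. Hence $\varepsilon$ is a well-defined nonzero element of $L$, and the task is to show $K(\varepsilon)=L$.

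The central computation is the norm of $\varepsilon$ down to $K(x)$. Setting $W=\mathrm{N}_{L/K(x)}((cy+d)^m)\in K(x)$, multiplicativity of the norm gives $\mathrm{N}_{L/K(x)}(\varepsilon)=(ax+b)^{n\ell}\,W^{-\ell}\,W^{\ell}=(ax+b)^{n\ell}$, because $(ax+b)^n$ and $W$ already lie in $K(x)$ (so their norms are $\ell$th powers) while $\mathrm{N}_{L/K(x)}((cy+d)^{-m\ell})=W^{-\ell}$. This is the precise analogue of the identity $\mathrm{Tr}_{L/K(x)}(\varepsilon)=ax\ell$ from Theorem \ref{primitive1}: the factor involving $y$ has norm $1$ over $K(x)$, so it contributes nothing. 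Since $L/K$ is abelian, every intermediate field is Galois over $K$; in particular $K(\varepsilon)/K$ is normal, so each conjugate $\varepsilon^\tau$ with $\tau\in\mathrm{Gal}(L/K(x))$ lies in $K(\varepsilon)$, and therefore $\mathrm{N}_{L/K(x)}(\varepsilon)=(ax+b)^{n\ell}$ belongs to $K(\varepsilon)$.

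From here I would recover $x$ and then $y$ by two applications of Lemma \ref{power}. Because $K(x)/K$ is Galois (an intermediate field of the abelian extension $L/K$) and $2<|a/b|$, Lemma \ref{power} applied with the nonzero exponent $n\ell$ yields $K(x)=K((ax+b)^{n\ell})\subseteq K(\varepsilon)$. Consequently the two factors $(ax+b)^n$ and $W$ of $\varepsilon$ lie in $K(x)\subseteq K(\varepsilon)$, whence $(cy+d)^{-m\ell}=\varepsilon\,(ax+b)^{-n}W^{-1}\in K(\varepsilon)$, and so $(cy+d)^{m\ell}\in K(\varepsilon)$ as well. Now $L/K(x)$ is itself abelian (its Galois group is a subgroup of the abelian group $\mathrm{Gal}(L/K)$) and $L=K(x)(y)$ with $y$ an algebraic integer and $2<|c/d|$; applying Lemma \ref{power} over the base field $K(x)$ with exponent $m\ell$ gives $L=K(x)((cy+d)^{m\ell})\subseteq K(\varepsilon)$. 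Since $\varepsilon\in L$ forces $K(\varepsilon)\subseteq L$, we conclude $K(\varepsilon)=L$.

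I would flag two points as requiring care rather than as genuine obstacles. First, one must confirm that Lemma \ref{power} truly applies with the prescribed exponents $n\ell$ and $m\ell$; inspecting its proof shows the conclusion is valid for \emph{every} nonzero integer exponent, which is what licenses both uses. Second, the two applications take place over different base fields ($K$ in the first, $K(x)$ in the second), so each time one must verify that the relevant extension is Galois, and each follows immediately from the hypothesis that $L/K$ is abelian, which makes $K(x)/K$, $K(\varepsilon)/K$ and $L/K(x)$ all Galois.
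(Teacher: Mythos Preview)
Your proof is correct and follows essentially the same route as the paper's: compute $\mathrm{N}_{L/K(x)}(\varepsilon)=(ax+b)^{n\ell}$, use abelianity to place this norm in $K(\varepsilon)$, then apply Lemma~\ref{power} twice (first over $K$ to recover $x$, then over $K(x)$ to recover $y$). Your write-up is in fact slightly more careful than the paper's, explicitly checking $ax+b,\,cy+d\neq0$ and flagging that Lemma~\ref{power} is invoked with the specific exponents $n\ell$ and $m\ell$ over two different base fields.
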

\begin{proof}
If we let
$\varepsilon=(ax+b)^n(cy+d)^{-m\ell}\mathrm{N}_{L/K(x)}((cy+d)^m)$,
then we get that
\begin{eqnarray}
\mathrm{N}_{L/K(x)}(\varepsilon)&=&\mathrm{N}_{L/K(x)}((ax+b)^n)
\mathrm{N}_{L/K(x)}((cy+d)^{-m\ell})
\mathrm{N}_{L/K(x)}(\mathrm{N}_{L/K(x)}((cy+d)^m))\nonumber\\
&=&\mathrm{N}_{L/K(x)}((cy+d)^{-m\ell})(\mathrm{N}_{L/K(x)}((cy+d)^m))^\ell\nonumber\\
&=&(ax+b)^{n\ell}.\label{norm}
\end{eqnarray}
Now that $L/K$ is an abelian extension, $K(\varepsilon)$
 contains $\mathrm{N}_{L/K(x)}(\varepsilon)$. Thus we obtain that
\begin{eqnarray*}
K(\varepsilon)&=&K(\varepsilon)(\mathrm{N}_{L/K(x)}(\varepsilon))~=~
K(\mathrm{N}_{L/K(x)}(\varepsilon))(\varepsilon)\\
&=&K((ax+b)^{n\ell})(\varepsilon)\quad\textrm{by (\ref{norm})}\\
&=&K(x)(\varepsilon)\quad\textrm{by Lemma \ref{power}}\\
&=&K(x)(\varepsilon/(ax+b)^n\mathrm{N}_{L/K(x)}((cy+d)^m)\\
&=&K(x)((cy+d)^{-m\ell}))\\
&=&K(x)(y)~=~L\quad\textrm{by Lemma \ref{power}}.
\end{eqnarray*}
This proves the theorem.
\end{proof}

\bibliographystyle{amsplain}

\address{
Department of Mathematical Sciences \\
KAIST \\
Daejeon 305-701 \\
Republic of Korea} {jkkoo@math.kaist.ac.kr}
\address{
Department of Mathematics\\
Hankuk University of Foreign Studies\\
Yongin-si, Gyeonggi-do 449-791\\
Republic of Korea } {dhshin@hufs.ac.kr}

\end{document}